\DeclareMathOperator{\IN}{Inner}
\DeclareMathOperator{\wi}{width}
\theoremstyle{plain}
\newtheorem{theorem}{Theorem}[section]
\newtheorem{corollary}[theorem]{Corollary}
\newtheorem{lemma}[theorem]{Lemma}
\newtheorem{observation}[theorem]{Observation}
\newtheorem{conjecture}[theorem]{Conjecture}
\newcommand{\smallpar}[1]{%
  \par
  \addvspace{\medskipamount}
  \textit{#1\@addpunct{.}}\enspace\ignorespaces
}
\renewcommand{\le}{\leqslant}
\renewcommand{\ge}{\geqslant}
\renewcommand{\leq}{\leqslant}
\renewcommand{\geq}{\geqslant}
\newcommand{\sst}[2]{\left\{#1\,\middle|\,#2\right\}}
\newcommand{\abs}[1]{\left\lvert#1\right\rvert}
\title{Equitable colorings of $K_4$-minor-free graphs}
\thanks{This work falls within the scope of A.N.R. project STINT}
\subjclass[2000]{Primary 05C15}
\keywords{Equitable coloring; $K_4$-minor-free graph; series-parallel graph; maximum degree; {decomposition} tree}
\author{Rémi de Joannis de Verclos}
\address{Laboratoire G-SCOP, C.N.R.S. et Univ. Grenoble Alpes, Grenoble, France.}
\email{remi.de-joannis-de-verclos@grenoble-inp.fr}
\author{Jean-S\'ebastien Sereni}
\address{Centre National de la Recherche Scientifique, LORIA, Vand\oe uvre-l\`es-Nancy,
France.} \email{sereni@kam.mff.cuni.cz}
\date{\today}
\begin{document}

\begin{abstract} 
      We demonstrate that for every positive integer~$\Delta$, every
      $K_4$-minor-free graph with maximum degree~$\Delta$ admits an equitable
      coloring with~$k$ colors where~$k\ge\frac{\Delta+3}{2}$. This bound is tight
      and confirms a conjecture by Zhang and~Whu. We do not use the discharging
      method but rather exploit decomposition trees of $K_4$-minor-free graphs.
\end{abstract}

\maketitle
\section{Introduction}\label{sec:intro} 
Equitable coloring is an ubiquitous notion. From a combinatorial point of view,
it corresponds to a natural variation of usual graph coloring where the color
classes are required to all have the same size, plus/minus one vertex.
Practically, this is one way to prevent color classes from being very large,
which can be useful when using graph coloring for scheduling purposes for instance.
Theoretically, equitable colorings were used successfully in \emph{a priori} unrelated
topics, such as probability. Indeed, one of the seminal results regarding equitable
colorings is the following theorem, which was established by Hajnal and
Szemerédi~\cite{HaSz70} (the statement was first conjectured by Erd\H{o}s).
\begin{theorem}[Hajnal--Szemerédi, 1970]
      Every graph with maximum degree at most~$\Delta$ admits an equitable
      coloring using~$\Delta+1$ colors.
      \label{thm:hasz}
\end{theorem}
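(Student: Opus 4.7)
The plan is to proceed through a combinatorial exchange argument on proper colorings, in the spirit of Hajnal and Szemerédi's original proof. First I would normalize the problem: by appending isolated vertices I may assume that the number of vertices $n$ is a multiple of $k := \Delta+1$, so the target becomes a proper $k$-coloring whose classes all have exactly $r := n/k$ vertices.

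Next I would fix a proper $k$-coloring $(V_1,\ldots,V_k)$ minimizing some measure of imbalance, say $\Phi := \sum_i \max(|V_i|-r,0)$. Assuming $\Phi > 0$, one can choose a small class $V_s$ (with fewer than $r$ vertices) and a large class $V_\ell$ (with more than $r$ vertices). The easy move is to find a vertex $v\in V_\ell$ having no neighbor in $V_s$ and shift it to $V_s$, which strictly decreases $\Phi$; since the coloring is extremal, no such $v$ can exist, so every vertex of $V_\ell$ already has a neighbor in $V_s$.

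The core idea is then to iterate: instead of a single shift, look for a chain of single-vertex moves that jointly decrease $\Phi$. I would construct an auxiliary digraph $D$ whose vertex set is $V(G)$ and whose arcs $uv$ indicate that $u$ can be moved from its own class into the class of $v$ without violating properness. A directed path in $D$ from a vertex of $V_\ell$ to a vertex of $V_s$ corresponds to a sequence of such shifts that transforms the coloring into a proper one with strictly smaller $\Phi$. If no such path exists, the set $A$ of vertices from which $V_s$ is unreachable in $D$ contains $V_\ell$ and is closed under in-neighborhoods, which constrains the coloring on $A$ very tightly.

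The main obstacle, which is the heart of the theorem, is to show that this closure property forces the existence of a vertex of degree at least $\Delta+1$, contradicting the hypothesis. I expect the delicate step to be a careful double-count: since each class of $A$ is "saturated" by a particular witness in $V_s$, an average vertex of $V_\ell$ should accumulate too many neighbors across the classes of $A$. A cleaner alternative would be to follow the Kierstead--Kostochka reorganization of these moves into an auxiliary forest rooted at $V_\ell$ whose leaves lie in saturated classes; such a forest controls the arithmetic more transparently and turns the final contradiction into a short counting argument.
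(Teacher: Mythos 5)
This statement is quoted in the paper as a classical theorem, with pointers to Hajnal--Szemer\'edi~\cite{HaSz70} and to the short proof of Kierstead, Kostochka, Mydlarz and Szemer\'edi~\cite{KKMS10}; the paper itself contains no proof, so your attempt can only be measured against those known arguments --- which is exactly where the problem lies. Your outline is faithful to the standard opening (padding with isolated vertices so that $k=\Delta+1$ divides $n$, taking an extremal proper $k$-coloring, and augmenting along directed paths of single-vertex moves), but it stops precisely at the heart of the theorem and replaces it with an expectation. Concretely: when no augmenting path from the large class~$V_\ell$ to the small class~$V_s$ exists, the closure property of the set~$A$ yields only that every vertex lying in a class of~$A$ has at least one neighbor in each class outside~$A$. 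If $a$ and~$b$ denote the numbers of classes inside and outside~$A$, this gives each such vertex at least~$b$ neighbors outside~$A$ and hence at most $\Delta-b=a-1$ neighbors inside~$A$ --- a configuration perfectly consistent with maximum degree~$\Delta$. So the ``careful double-count'' you hope for does not produce a vertex of degree~$\Delta+1$, and no simple averaging can: the obstruction is genuinely realizable at this level of information. This is why the actual proofs must do more. In the KKMS argument one works with nearly equitable colorings obtained by induction on $\abs{V(G)}$, and the hard case is resolved not by a count of degrees but by a delicate analysis involving two-vertex \emph{swaps} between classes, ``solo'' vertices and terminal classes; your auxiliary digraph cannot even express these moves, since its arcs encode only single-vertex shifts.

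A secondary, repairable issue: a directed path in your vertex-level digraph~$D$ need not correspond to a legal sequence of moves if it revisits a color class, because each move alters the classes and may invalidate later arcs. This can be fixed (a shortest path can be shortcut so as to visit each class at most once, since an arc into one vertex of a class is an arc into every vertex of that class), but it deserves a line. The essential verdict, however, is the first point: deferring the contradiction to ``the Kierstead--Kostochka reorganization'' is an appeal to the literature, not a proof, and the step you defer is the entire content of the theorem.
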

Theorem~\ref{thm:hasz} allowed for a simplified demonstration of the Blow-up lemma
--- found by R\"odl and Ruci\'nski~\cite{RoRu99}. In addition, this theorem was
also used to derive deviation bounds for sums of random variables with some degree
of dependence --- this was done by Alon and F\"uredi~\cite{AlFu92} and by Janson
and Ruci\'nski~\cite{JaRu02}.  Let us point out that in~2010, that is, forty years
after Theorem~\ref{thm:hasz} was proved, a much simpler demonstration was finally
found, building on several other related results. More precisely, Kierstead,
Kostochka, Mydlarz and Szemerédi~\cite{KKMS10} managed to find a two-page proof of
Theorem~\ref{thm:hasz}, which also has the advantage to lead to a polynomial-time
algorithm that efficiently finds a relevant coloring --- contrary to the original
argument.

As it turns out, the notion of equitable colorings behaves pretty differently from
usual colorings, and it is a challenging task to better comprehend its relation to
well-known graph classes.  Starting from graphs with bounded maximum degree, it is
natural to consider next $d$-degenerate graphs. The following theorem was
established by Kostochka and Nakprasit~\cite{KoNa03}, in a more general form.
\begin{theorem}[Kostochka--Nakprasit, 2003]
      Let $\Delta$ be an integer greater than~$53$.
     If $G$ is a $2$-degenerate graph with maximum degree at most~$\Delta$, then
     $G$ is equitably $k$-colorable whenever $k\ge\frac{\Delta+3}{2}$.
      \label{thm:kona}
\end{theorem}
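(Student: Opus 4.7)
The plan is to proceed by contradiction: let $G$ be a counterexample with the fewest vertices. Since $2$-degenerate graphs are $3$-colorable, the real issue is not properness but balancing the sizes of the color classes, so the proof must centre on that aspect.

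First I would establish a catalogue of \emph{reducible configurations} --- local structures whose presence in $G$ would contradict its minimality. Dictated by the $2$-degenerate hypothesis, the natural candidates are a vertex of degree at most $2$, pairs of adjacent low-degree vertices, and small subgraphs in which two low-degree vertices share most of their neighborhood. For each such configuration $S$, the reduction pattern is uniform: delete $S$, obtain an equitable $k$-coloring of $G-S$ by induction (handling divisibility either by allowing the target class sizes to vary by $\pm 1$, or by padding with bounded-degree dummy vertices), and then re-insert the vertices of $S$ while preserving equitability.

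The main obstacle is this re-insertion step. Since $k\ge\frac{\Delta+3}{2}$ can be much smaller than $\Delta+1$, a vertex $v\in S$ may have every color blocked by its neighbors, and one is forced to rely on \emph{Kempe swaps}: swapping colors on a connected component of the subgraph induced by colors $i$ and $j$ to free color $i$ at $v$, at the cost of shifting the sizes of classes $i$ and $j$. Two-degeneracy is used crucially here, because it forces these Kempe components to be essentially tree-like, giving fine control over how much each swap changes each class. The hard part is combining several swaps so that, once every vertex of $S$ has been absorbed, the coloring is still equitable; this typically calls for a Hall-type matching argument between the vacancies created by the swaps and the vertices of $S$ still to be coloured.

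Having constrained the structure of $G$ this way, the final step is to derive a contradiction by a global counting argument: the absence of all reducible configurations should force $G$ to violate the $2$-degeneracy bound $|E(G)|\le 2|V(G)|-3$. The numerical threshold $\Delta>53$ enters precisely at this balancing, which becomes favourable only for sufficiently large $\Delta$. This closing step is where a discharging argument is most natural --- consistent with the present paper's emphasis on avoiding discharging for the $K_4$-minor-free case treated below.
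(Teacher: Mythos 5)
There is a genuine gap here, and also a point of context you should know: the paper does not prove this statement at all --- it is quoted verbatim from Kostochka and Nakprasit~\cite{KoNa03}, so the only fair comparison is with that source. Measured against it, your text is a research plan rather than a proof: the catalogue of reducible configurations is never specified, the re-insertion step is explicitly flagged as ``the hard part'' and left unexecuted (the Hall-type matching argument is named but not formulated, let alone verified), and the closing counting step is merely asserted to become ``favourable for sufficiently large $\Delta$''. Since the threshold $\Delta>53$ is the entire quantitative content of the theorem, a proof must actually derive it from explicit inequalities; nothing in your sketch produces the number $53$, or any number. Moreover, your expectation that the argument closes with discharging does not match the actual proof in~\cite{KoNa03}, which is not of the reducible-configuration-plus-discharging type: it proceeds by inductively improving \emph{nearly equitable} colorings, moving individual vertices between color classes guided by an auxiliary digraph on the classes, with $d$-degeneracy entering through counts of classes into which a vertex of small back-degree can legally move.

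Two specific steps in your plan would fail as stated. First, the claim that $2$-degeneracy forces Kempe components to be ``essentially tree-like'' is false: the union of two color classes in a $2$-degenerate graph is an arbitrary bipartite $2$-degenerate graph and may contain, for instance, large copies of $K_{2,m}$, so a single Kempe swap can shift class sizes by an unbounded and poorly controlled amount --- precisely the control your absorption argument needs and does not have. Second, the divisibility fixes you propose are both problematic: deleting a small set $S$ and recoloring forces the re-inserted vertices into \emph{specific} (smallest) classes, not merely into some available class, and padding with dummy vertices does not work either, since restricting an equitable $k$-coloring of the padded graph to $G$ need not be equitable when two or more dummies land in the same class. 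This is why careful treatments (including the reductions in the present paper, and the machinery of~\cite{KoNa03,KKMS10}) arrange to remove exactly $k$ vertices at a time, or to control the exact multiset of colors placed on the removed set --- an idea absent from your outline.
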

Theorem~\ref{thm:kona} partially confirms a conjecture by Zhang and
Wu~\cite[Conjecture~9]{ZhWu11}, (also see~\cite[Conjecture~6, p.~1209]{Lih13}) that if $\Delta\ge3$, then every series-parallel
graph with maximum degree~$\Delta$ admits an equitable $k$-coloring whenever
$k\ge\frac{\Delta+3}{2}$. Indeed, series-parallel graphs are known to be~$2$-degenerate,
so Theorem~\ref{thm:kona} yields that the conjecture is true if~$\Delta\ge54$.
The purpose of our work is to establish the conjecture for all the remaining
cases, that is, $\Delta\in\{3,\dotsc,53\}$. (Although, in our proofs we do not use
the upper bound on~$\Delta$, and simply prove the statement for all
$K_4$-minor-free graphs.)

The statement conjectured by Zhang and Wu is actually a strengthening of a result of theirs~\cite{ZhWu11},
which establishes that every series-parallel graph with maximum degree~$\Delta\ge3$ admits
an equitable $k$-coloring if $k\ge\Delta$. The conjecture can also be seen
as a generalisation of a theorem of Kostochka~\cite{Kos02} that every outerplanar
with maximum degree~$\Delta\ge3$ admits an equitable $k$-coloring whenever
$k\ge\frac{\Delta+3}{2}$.

It is worth mentioning that Kostochka, Nakprasit and Pemmaraju~\cite{KNP05}
established (a generalisation of) the following interesting statement.
\begin{theorem}[Kostochka, Nakprasit \& Pemmaraju, 2005]
     Fix an integer~$k\ge124$. If $G$ is a $2$-degenerate graph
     with maximum degree at most~$\frac12\abs{V(G)}+1$, then $G$ admits
     an equitable $k$-coloring.
      \label{thm:knp}
\end{theorem}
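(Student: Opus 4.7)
The plan is to combine the 2-degeneracy of~$G$ with a swap-based rebalancing procedure, in the spirit of the Kierstead--Kostochka--Mydlarz--Szemer\'edi proof of Theorem~\ref{thm:hasz}. Since~$G$ is 2-degenerate, it admits a vertex ordering in which every vertex has at most two earlier neighbours, so in particular $\abs{E(G)}\le 2\abs{V(G)}-3$; hence the average degree is less than~$4$ and very few vertices can have large degree.

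I would first fix an arbitrary equitable partition $V_1,\dots,V_k$ of $V(G)$ into classes of sizes differing by at most one. This partition is equitable but not necessarily proper; let~$B$ denote its set of monochromatic edges. The goal is to iteratively reduce $\abs{B}$ to zero while preserving equitability. Given a bad edge $uv$ with $u,v\in V_i$, I would look for a \emph{legal swap}: a class~$V_j$ and a vertex $w\in V_j$ such that exchanging~$v$ and~$w$ between~$V_i$ and~$V_j$ creates no new monochromatic edge, that is, such that $v$ has no neighbour in $V_j\setminus\{w\}$ and $w$ has no neighbour in $V_i\setminus\{v\}$. The swap preserves class sizes and removes~$uv$ from~$B$, so a suitable potential built from bad edges strictly decreases.

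The crux is showing that such a legal swap always exists. For fixed~$v\in V_i$, a class~$V_j$ is forbidden as a target whenever~$v$ has a neighbour in~$V_j$; since $v$ has at most $\Delta\le\tfrac12\abs{V(G)}+1$ neighbours, at most~$\Delta$ of the $k\ge 124$ classes are forbidden in this sense. Among the remaining candidate classes, I would bound the number of vertices~$w$ that obstruct a swap (those having a neighbour in~$V_i$): since $\abs{E(G)}\le 2\abs{V(G)}-3$, the total number of such obstructing candidates is small on average, so that averaging over candidate classes produces a legal pair $(V_j,w)$ provided~$k$ is sufficiently large --- this is where the explicit threshold $k\ge 124$ should enter, arising from the combined budget of forbidden classes and obstructing vertices.

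The main obstacle, I expect, is the joint counting step: a swap may resolve one bad edge but can create new monochromatic edges through neighbours of~$w$ in its new class~$V_i$, or through neighbours of~$v$ in its new class~$V_j$. The standard remedy is to replace a single swap by an augmenting path or chain in an auxiliary bipartite ``swap graph'' whose vertices encode pairs (class, swappable vertex), and to prove that an augmenting structure of bounded length always exists under the hypotheses. Turning such an existence statement into a convergent iterative scheme with a monotone potential function guaranteeing termination is the technically demanding part of the argument.
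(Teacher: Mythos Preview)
The paper does not prove Theorem~\ref{thm:knp}; it is quoted from~\cite{KNP05} as background (indeed it is explicitly noted that ``Theorem~\ref{thm:knp}, however, does not bring us any new information regarding the problem at hands''). So there is no in-paper proof to compare against, and you are effectively proposing a proof of an external result.

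As a proof sketch, your proposal has a genuine gap at the counting step. You write that ``since $v$ has at most $\Delta\le\tfrac12\abs{V(G)}+1$ neighbours, at most~$\Delta$ of the $k\ge 124$ classes are forbidden''. But the hypothesis places no bound on~$\Delta$ in terms of~$k$: for large~$\abs{V(G)}$, the quantity~$\Delta$ can be vastly larger than~$k$, so the inequality ``at most~$\Delta$ classes are forbidden'' is vacuous and does not leave any candidate class~$V_j$ at all. In the same spirit, the bound $\abs{E(G)}\le 2\abs{V(G)}-3$ does not make the number of obstructing vertices small compared with class sizes, since class sizes are roughly $\abs{V(G)}/k$ while the edge bound is linear in~$\abs{V(G)}$; an averaging over~$k$ classes gives about $2\abs{V(G)}/k$ obstructing candidates per class, which is of the same order as the class size and hence not negligible. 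The actual argument in~\cite{KNP05} does not proceed by single-swap rebalancing of an arbitrary equitable partition; it builds the coloring along a degeneracy ordering while carefully maintaining balance, and the role of the hypothesis $\Delta\le\tfrac12\abs{V(G)}+1$ is precisely to ensure that each class is large enough (about $\abs{V(G)}/k$) relative to the number of forbidden colors at a high-degree vertex, a comparison your outline never sets up.
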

Theorem~\ref{thm:knp}, however, does not bring us any new information regarding
the problem at hands. Indeed, we need to consider graphs with maximum
degree~$\Delta\le53$, while the number of colors needs to be at least~$124$. Hence
for our question the information provided by Theorem~\ref{thm:knp} is already
contained in the aforementioned result of Zhang and Wu~\cite{ZhWu11}.

As reported earlier, we establish the following.
\begin{theorem}
      If $G$ is a $K_4$-minor-free graph with maximum degree~$\Delta$, then
      $G$ admits an equitable $k$-coloring whenever $k\ge\frac{\Delta+3}{2}$.
      \label{thm:main}
\end{theorem}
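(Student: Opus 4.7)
The plan is to exploit the recursive decomposition of a $K_4$-minor-free graph into pieces glued along vertex cuts of size at most~$2$, and to prove a strengthened statement by induction on such a decomposition tree. The strengthening is naturally formulated for \emph{$2$-rooted} graphs $(G,\{s,t\})$ with two distinguished terminal vertices, and should record, for each admissible coloring of the terminals, how flexibly the remaining color-class sizes can be realized.

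First, I would recall that every $2$-connected $K_4$-minor-free graph is series-parallel, hence can be built from a single edge by repeatedly applying parallel composition (identifying both terminals of two $2$-rooted graphs) and series composition (identifying exactly one terminal of each). A general $K_4$-minor-free graph is obtained by gluing its blocks at cut vertices. These three operations---parallel, series, and cut-vertex merging---form the backbone of the induction, and correspond exactly to the internal nodes of the decomposition tree alluded to in the abstract.

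Next, I would formulate the inductive invariant. For a $2$-rooted $K_4$-minor-free graph $(G,s,t)$ with internal vertices of degree at most~$\Delta$ and terminals carrying a prescribed residual degree budget (to accommodate future gluing), the invariant should assert: for every terminal color pair $(\alpha,\beta)$ compatible with the residual degree constraints, and for every class-size vector $(n_1,\dots,n_k)$ lying within an appropriate ``equitable window'' around $\abs{V(G)}/k$, there exists a proper coloring of $G$ realizing this data. The hypothesis $k\ge(\Delta+3)/2$ enters here by controlling how many colors a terminal's external neighbors can forbid, and hence by guaranteeing that the set of admissible pairs $(\alpha,\beta)$ is rich enough to be combined under composition. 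Base cases---a single vertex, or a single edge with $k\ge2$---are handled by direct inspection.

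Finally, I would verify the three composition rules. In parallel composition, both terminals are shared, so the two pieces must agree on $(\alpha,\beta)$ and their class-size vectors must add (after subtracting the doubly counted terminals) to an equitable global vector; the required flexibility is precisely the one encoded in the invariant. In series composition, only a single shared terminal is forced, so the matching condition on class-size vectors is looser. The cut-vertex case is analogous but simpler. The main obstacle, as is typical in induction-on-decomposition proofs for equitable coloring, is choosing the invariant to be simultaneously preserved by all three operations and true at the base cases. I expect the tightest constraint to appear in parallel composition when both pieces are ``close to saturated'' at the shared colors, together with the integer-rounding discrepancy between $\lfloor\abs{V(G)}/k\rfloor$ and $\lceil\abs{V(G)}/k\rceil$; this will likely require a case analysis driven by $\abs{V(G)}\bmod k$ and by the external degrees of the terminals, with the threshold $k\ge(\Delta+3)/2$ being exactly what makes every case feasible.
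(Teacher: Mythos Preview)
Your proposal is a research plan, not a proof: the central object---the inductive invariant on $2$-rooted graphs---is never written down, and you explicitly flag its choice as ``the main obstacle'' still to be resolved. Without a concrete invariant there is nothing to verify; the proposal amounts to ``prove the right strengthened statement by induction on the SP-tree,'' which is the first strategy one tries and which is hard precisely because no clean such invariant is known.

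Concretely, the parallel-composition step is where direct inductions of this kind tend to break. If both pieces $H_1,H_2$ have close to $k$ inner vertices and share both terminals, you must realize class-size vectors in each piece that sum (after subtracting the shared terminals) to an equitable vector for the whole, and these admissible vectors depend on the terminal colors $(\alpha,\beta)$ you fix. The combined terminal degrees can be nearly~$\Delta$, leaving almost no slack in the choice of $(\alpha,\beta)$. Making the ``equitable window'' wide enough to survive this while still being provable at the leaves is exactly the difficulty you acknowledge but do not resolve; the vague phrase ``residual degree budget'' does not by itself control the interaction between the rounding discrepancy and the forbidden terminal colors.

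The paper avoids carrying any such invariant. It fixes a minimal counterexample~$G$ with an SP-tree in normal form and argues about the \emph{width} (number of inner vertices) of subgraphs represented by construction subtrees. Through a sequence of reductions---deleting or contracting the inner vertices of a piece, invoking minimality to obtain an equitable coloring of the smaller graph, and extending by hand---it shows that no subtree can represent a subgraph of width $k-1$ or~$k$, and then that parallel and serial joins of pieces of width at most $k-2$ again have width at most $k-2$. An induction on the tree then forces $\abs{V(G)}\le k$, a contradiction. The point is that each reduction only has to extend \emph{one} equitable coloring of a smaller graph, rather than parametrize all achievable class-size vectors; this is what makes the argument go through where the direct induction stalls.
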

Contrary to the proof of some of the results mentioned above, we do not rely on
discharging, but rather on the structural links between $K_4$-minor-free graphs
and two-terminal series-parallel graphs: in particular, our proof heavily relies
on a so-called SP-tree.  Before proceeding with the proof, we review some folklore
properties of $K_4$-minor-free graphs and two-terminal series-parallel graphs and
introduce a bit of terminology.

It would be interesting to know whether Theorem~\ref{thm:main} can be extended to the class
of $2$-degenerate graphs. A generalisation of this has actually been conjectured
in~2003 by Kostochka and Napkrasit~\cite{KoNa03}.
\begin{conjecture}
      Fix an integer~$\Delta$. If~$d\in\{2,\dotsc,\Delta\}$ and~$G$ is a $d$-degenerate
      graph with maximum degree at most~$\Delta$, then $G$ admits an equitable $k$-coloring
      whenever $k\ge\frac{\Delta+d+1}{2}$.
      \label{conj:degenerate}
\end{conjecture}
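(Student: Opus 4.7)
The plan is to proceed by induction on $n=|V(G)|$, combined with augmenting-chain arguments imported from the Kierstead--Kostochka--Mydlarz--Szemerédi proof of Theorem~\ref{thm:hasz}, tuned to exploit $d$-degeneracy. Two endpoints of the parameter range are immediate: when $d=\Delta$ the conjecture is precisely Theorem~\ref{thm:hasz}, since every graph with maximum degree at most~$\Delta$ is $\Delta$-degenerate; and when $n\le k$, assigning distinct colors to the vertices is already equitable. So one may restrict attention to $2\le d\le\Delta-1$ and $n>k$, and one may also assume $G$ is $d$-degenerate but not $(d-1)$-degenerate (otherwise apply induction on~$d$).

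For the inductive step, pick a vertex $v$ with at most $d$ neighbors, guaranteed by $d$-degeneracy, and let $c$ be an equitable $k$-coloring of $G-v$ produced by the induction hypothesis; each of its classes has size $\lfloor(n-1)/k\rfloor$ (``small'') or $\lceil(n-1)/k\rceil$ (``large''). If some small class contains no neighbor of~$v$, extending $c$ by giving $v$ that color finishes the proof. Otherwise every small class is blocked by a neighbor of~$v$, which in particular forces the number of small classes to be at most~$d$. The task becomes to transform~$c$ --- while keeping it proper on $G-v$ and keeping all class sizes in $\{\lfloor(n-1)/k\rfloor,\lceil(n-1)/k\rceil\}$ --- into a coloring in which at least one small class avoids $N(v)$. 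Following Kierstead--Kostochka, I would build an auxiliary digraph on the color classes, with an arc from $A$ to $B$ whenever some vertex of $A$ can be moved to $B$ without creating a conflict, and search for a directed walk from a large, unblocked class to a small, blocked class; traversing it and performing the recolorings along it swaps the sizes of those two classes and produces the desired modification of~$c$.

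The chief obstacle, and a likely reason why Conjecture~\ref{conj:degenerate} has remained unresolved since 2003, is establishing the existence of such an augmenting walk under the comparatively tight hypothesis $k\ge(\Delta+d+1)/2$. In the Hajnal--Szemerédi regime one enjoys the generous bound $k\ge\Delta+1$, which is precisely what the existing augmenting argument consumes; here $k$ can be barely above $\Delta/2$, so many more arcs of the auxiliary digraph must be simultaneously present, and a single neighbor of~$v$ may lie in a class containing up to $\Delta-1$ other vertices, each of which could obstruct a further swap. My hope would be to use $d$-degeneracy twice: first to bound the average degree encountered when building the auxiliary digraph, and second by fixing a degeneracy ordering of $V(G)$ and only allowing recoloring cascades to propagate toward vertices of larger rank, so that the process terminates in a bounded number of steps. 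For the specific case $d=2$ treated in Theorem~\ref{thm:main} of the present paper, the richer structural information provided by SP-trees of $K_4$-minor-free graphs replaces these augmenting arguments entirely; since no analogous decomposition is known for arbitrary $d$-degenerate graphs, any proof of the full conjecture will presumably require a genuinely new combinatorial idea, perhaps of a probabilistic or semi-random flavor in the spirit of Theorem~\ref{thm:knp}.
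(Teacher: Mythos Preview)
The statement you are attempting to prove is labeled \texttt{conjecture} in the paper, and indeed the paper contains no proof of it: Conjecture~\ref{conj:degenerate} is simply quoted from Kostochka and Nakprasit (2003) as an open problem motivating the study of the special case treated in Theorem~\ref{thm:main}. So there is no ``paper's own proof'' to compare against.

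Your proposal is not a proof either, and you say as much yourself. What you have written is a reasonable \emph{plan of attack} --- induct on $n$, remove a low-degree vertex, color $G-v$, try to free up a small class via Kierstead--Kostochka-style augmenting walks --- together with an honest assessment of why this plan stalls: the augmenting-walk machinery in the Hajnal--Szemer\'edi proof consumes the full margin $k\ge\Delta+1$, whereas here one only has $k\ge(\Delta+d+1)/2$, and you have no concrete mechanism for bridging that gap. The suggestions to ``use $d$-degeneracy twice'' and to restrict cascades to a degeneracy ordering are plausible heuristics, but you have not shown that either actually produces the required augmenting walk, nor bounded the number of steps, nor handled the case where every candidate swap is obstructed. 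In short, the proposal identifies the right difficulty but does not overcome it; it remains a research outline rather than a proof, which is consistent with the conjecture's status as open.
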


\section{The structure of $K_4$-minor-free Graphs}\label{sec:struct} 
As it turns out, graphs with no $K_4$-minor are strongly related to two-terminal
series-parallel graphs.  A \emph{two-terminal graph} is a graph with two
distinguished vertices called \emph{poles}.  \emph{Two-terminal series-parallel
graphs} are two-terminal graphs that can be obtained by the following recursive
construction\footnote{We point out that in the literature, such graphs are
      sometimes called simply 'series-parallel graphs', while this term can also
      be used to refer to $K_4$-minor-free graphs.}.
The basic two-terminal series-parallel graph is an edge~$uv$ with the two poles
being its end-vertices. For~$i\in\{1,2\}$, let~$G_i$ be a two-terminal
series-parallel graph with poles~$u_i$ and~$v_i$. The graph~$S(G_1,G_2)$ obtained
by identifying the vertices~$v_1$ and~$u_2$ is also a two-terminal series-parallel
graph and its two poles are the vertices~$u_1$ and~$v_2$. The graph~$S(G_1,G_2)$
obtained in this way is called the \emph{serial join} of~$G_1$ and~$G_2$. The
\emph{parallel join} of~$G_1$ and~$G_2$ is the graph~$P(G_1,G_2)$ obtained by
identifying the pairs of vertices~$(u_1,u_2)$ and~$(v_1,v_2)$; the poles
of~$P(G_1,G_2)$ being the identified vertices. Two-terminal series-parallel graphs
are precisely those that can be obtained from edges by a series of serial and
parallel joins.  The decomposition tree corresponding to a two-terminal
series-parallel graph~$G$ is not unique.  In fact, there is a lot of freedom in
its choice as can be seen in the following well-known result.
\begin{lemma}\label{v-decomp} Let~$G$ be a two-terminal series-parallel graph
      and~$v$ a vertex of~$G$. There exists an SP-decomposition tree such that $v$
      is one of the poles of the graph corresponding to the root of the
      SP-decomposition tree.
\end{lemma}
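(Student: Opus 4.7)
The plan is to argue by induction on the number of edges of~$G$, equivalently on the number of leaves of an SP-decomposition tree of~$G$. The base case is when $G$ is a single edge: its two endpoints are its poles, so the trivial one-leaf decomposition tree already has~$v$ as a pole. For the inductive step, pick any SP-decomposition tree of~$G$. Since $G$ has more than one edge, its root is either a serial join $S(G_1,G_2)$ or a parallel join $P(G_1,G_2)$ of two strictly smaller two-terminal series-parallel graphs. If $v$ is already a pole of the root of this tree then there is nothing to prove, so we may assume that~$v$ lies strictly inside~$G_1$ or~$G_2$---or, in the serial case, is the vertex at which~$G_1$ and~$G_2$ are glued.

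In each of these situations I would apply the induction hypothesis to the subgraph containing~$v$ to obtain a decomposition in which~$v$ is a pole of the root, and then splice this decomposition back with the other, untouched half through a single top-level~$S$ or~$P$ operation. The recombination exploits the associativity of the serial and parallel operations, which lets one rotate subtrees around so as to push the distinguished pole upward in a controlled way. In the delicate situation where the root is a serial join and~$v$ is the glueing vertex, one uses the fact that~$v$ is then simultaneously a pole of both~$G_1$ and~$G_2$ in order to reorganise the top of the tree so that~$v$ emerges at the root rather than at the interior.

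The main technical obstacle I foresee is ensuring that the pole produced by the inductive call is compatible with the top-level recombination: one needs sufficient control over the \emph{second} pole of the inductive decomposition to glue it back to the remaining half. I expect to handle this by strengthening the induction hypothesis so that, besides promoting~$v$ to a pole, one may also prescribe which of the two original poles of the subgraph is retained as the companion pole. With such a refinement in hand, the case analysis---driven by whether the top-level operation is serial or parallel and by the location of~$v$ in $G_1$ versus $G_2$---reduces to a mechanical matching of poles combined with associativity rewrites, and the lemma follows.
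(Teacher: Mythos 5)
The paper offers no proof of this lemma---it is invoked as a well-known fact---so your attempt has to stand on its own, and it does not: the two places where you defer the work are exactly where the difficulty lives. First, the parallel case cannot be repaired by ``a single top-level $S$ or $P$ operation'' plus associativity. If $G=P(G_1,G_2)$ with poles $s,t$ and $v$ is an inner vertex of $G_1$, the inductive call gives a tree for $G_1$ with poles, say, $(v,s)$; but then $t$ has become an \emph{inner} vertex of that tree, while $G_2$ must be glued to $G_1$ at \emph{both} $s$ and $t$. Serial and parallel joins only identify along poles, so no top-level operation reattaches $G_2$; one has to perform surgery deep inside the re-rooted tree (threading $G_2$ in where the $s$--$t$ connection sits), and your sketch contains no mechanism for this. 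Second, the strengthened induction hypothesis you propose to fix the companion pole is \emph{false}. Take $G=K_4$ minus the edge $vt$, with the other two vertices called $s$ and $w$; it is two-terminal series-parallel with poles $(s,t)$, e.g.\ as $P\bigl(st,\,S(T,wt)\bigr)$ where $T$ is the triangle on $\{s,v,w\}$ with poles $(s,w)$. Yet no decomposition tree of $G$ can have poles $(v,t)$: for any graph represented with poles $p,q$, a parallel join with an edge leaf shows $G+pq$ is again $K_4$-minor-free, whereas $G+vt=K_4$. So one cannot prescribe which original pole survives as companion; the true statement only yields \emph{some} second pole, chosen adaptively (essentially a neighbour of $v$ in $G+st$), and the ``mechanical matching of poles'' you rely on is unavailable.

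There is also a definitional gap at the base of your induction. If leaves are required to be single edges (your base case), the lemma is simply false: an easy induction shows every non-pole vertex of such a two-terminal series-parallel graph has degree at least $2$, so the path $a$--$b$--$c=S(ab,bc)$ admits no tree with the middle vertex $b$ as a root pole---and this is precisely your serial ``glueing vertex'' subcase, where you claim the top of the tree can be reorganised. The paper's SP-decomposition trees allow leaves consisting of two poles with \emph{no} edge between them, and any correct proof must exploit such non-edge leaves (or restrict the statement to $2$-connected graphs, which is how the paper actually applies it to blocks). The folklore argument being invoked is short and sidesteps your tree induction entirely: add the edge $st$ between the poles, so that $G+st$ is $2$-connected and $K_4$-minor-free; by the classical fact that a $2$-connected $K_4$-minor-free graph is two-terminal series-parallel with respect to \emph{every} edge, take an edge incident with $v$ as the new pole pair; finally, if $st\notin E(G)$, replace the leaf representing $st$ by a non-edge leaf. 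I would recommend rebuilding your argument along these lines rather than trying to patch the recombination step.
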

It is also well known that every $2$-edge-connected $K_4$-minor-free graph is
a two-terminal series-parallel graph.
\begin{lemma}\label{block0} Every block of a $K_4$-minor-free graph is
      a two-terminal series-parallel graph.
\end{lemma}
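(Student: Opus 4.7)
The plan is to dispatch the trivial blocks first: a block that is a single vertex or a single edge is (tautologically) a two-terminal series-parallel graph, an edge being the basic building block. It thus suffices to handle a $2$-connected block~$B$ of the $K_4$-minor-free graph, which I would do by induction on~$|V(B)|$. The base case $|V(B)|=2$ is again a single edge.

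For the inductive step, I would invoke the well-known fact that $K_4$-minor-free graphs are $2$-degenerate: since $B$ is $2$-connected on at least three vertices, it contains a vertex~$v$ of degree exactly~$2$, say with neighbors~$u$ and~$w$. If $uw\notin E(B)$, I would suppress~$v$ (contract one of its two incident edges). The resulting simple graph $B'$ is a minor of~$B$, hence $K_4$-minor-free, and is still $2$-connected; by induction $B'$ is a two-terminal series-parallel graph. Arranging an SP-decomposition of $B'$ in which the edge $uw$ produced by the suppression appears as a leaf and then expanding that leaf into the serial join $S(uv,vw)$ yields an SP-decomposition of~$B$. If instead $uw\in E(B)$, I would first verify that $B''=B-v$ is $2$-connected (using that $uw$ is itself an edge of $B''$ together with the $2$-connectivity of $B$) and still $K_4$-minor-free, apply the induction hypothesis to~$B''$, and then replace a leaf $uw$ of an SP-decomposition of $B''$ by the parallel join $P(uw,S(uv,vw))$ to recover an SP-decomposition of~$B$.

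The main obstacle is to ensure that the SP-decomposition produced by induction can be arranged so that a prescribed edge is available as a leaf with its two endpoints as poles of the corresponding node. Lemma~\ref{v-decomp} already makes it possible to choose either endpoint as a pole of the root; a short additional argument, which iterates the lemma inside the SP-tree, upgrades this to the statement that any edge of a $2$-connected two-terminal series-parallel graph can be placed as a leaf of some SP-decomposition tree. Once this refinement is in hand, the remaining verifications---that the reductions preserve $2$-connectivity and $K_4$-minor-freeness---are standard and reduce to inspecting how a degree-$2$ vertex interacts with a putative cut-vertex of~$B$.
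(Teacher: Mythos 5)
The paper offers no proof of Lemma~\ref{block0} at all: it is invoked as a well-known fact (going back in essence to Dirac and Duffin), so your argument is not competing with a written proof but supplying the standard one, and it is correct. Your induction on a degree-$2$ vertex~$v$ (which exists because $K_4$-minor-free graphs are $2$-degenerate and a $2$-connected graph on at least three vertices has minimum degree at least~$2$), split into the cases $uw\notin E(B)$ (suppress~$v$; the result is a minor of~$B$, hence $K_4$-minor-free, and is $2$-connected because $B$ is a subdivision of it --- note $uw\notin E(B)$ forces $\abs{V(B)}\ge4$) and $uw\in E(B)$ (delete~$v$ and check that $B-v$ is $2$-connected, via the cut-vertex analysis you sketch), is exactly the classical route. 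However, what you single out as ``the main obstacle'' is in fact vacuous under the paper's definition of an SP-decomposition tree: the leaves of such a tree are precisely the basic two-vertex graphs from which the graph is assembled, and serial and parallel joins only identify vertices and never create edges, so in a simple graph \emph{every} edge is automatically a leaf of \emph{every} SP-decomposition tree, with its endpoints as the poles of that leaf node. Hence no iteration of Lemma~\ref{v-decomp} is needed; you may take any decomposition tree of~$B'$ (respectively~$B''$) and substitute for the leaf~$uw$ the subtree representing $S(uv,vw)$ (respectively $P(uw,S(uv,vw))$), which is legitimate precisely because the substituted two-terminal graph has the same poles $u$ and~$w$. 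Two small bookkeeping points: when $\abs{V(B)}=3$ and $uw\in E(B)$, the graph $B-v$ is $K_2$, which is not $2$-connected, so the induction hypothesis should formally cover $K_2$ as well (your base case already does this); and a single-vertex block is not literally a two-terminal graph, a degenerate convention the paper itself glosses over and which is harmless for everything that follows.
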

The set of $K_4$-minor-free graphs can also be seen as the closure of two-terminal
series-parallel graphs by the spanning subgraph relation.
\begin{lemma}\label{block} A graph~$G$ has no $K_4$-minor if and only if $G$ is
  the spanning subgraph of a two-terminal series-parallel graphs.
\end{lemma}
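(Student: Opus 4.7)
The backward direction is the folklore observation that every two-terminal series-parallel graph is $K_4$-minor-free, checked by a straightforward induction on the SP-decomposition (the base edge has no $K_4$-minor, while in a serial or parallel join a putative $K_4$-model can be pushed entirely into one of the two factors by a pigeonhole argument on the branch sets relative to the shared pole(s)). Since the $K_4$-minor-free class is closed under taking subgraphs, every spanning subgraph of a two-terminal series-parallel graph is $K_4$-minor-free.

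For the forward direction, I plan to prove the following strengthening by induction on $\abs{V(G)}$:
\emph{for every connected $K_4$-minor-free graph $G$ with $\abs{V(G)}\ge 2$ and every vertex $v\in V(G)$, there is a two-terminal series-parallel graph $G'$ on vertex set $V(G)$ that contains $G$ as a spanning subgraph and admits $v$ as one of its poles.}
A disconnected $G$ reduces to this case by first adding bridges to link up its components, which preserves $K_4$-minor-freeness since a bridge cannot belong to any $K_4$-model ($K_4$ being $2$-edge-connected). The base case $\abs{V(G)}=2$ is immediate. For the inductive step, if $G$ is $2$-edge-connected then Lemma~\ref{block0} furnishes $G$ itself as a two-terminal series-parallel graph and Lemma~\ref{v-decomp} places $v$ as a pole. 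Otherwise, fix any cut vertex $c$ of $G$ and let $G_1,\dots,G_k$ be the subgraphs induced by $\{c\}\cup V_i$, where $V_1,\dots,V_k$ are the vertex sets of the components of $G-c$; each $G_i$ is connected, $K_4$-minor-free, and strictly smaller than $G$, so induction yields a two-terminal series-parallel graph $G_i'$ on $V(G_i)$ containing $G_i$ as a spanning subgraph with $c$ as a pole.

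It remains to amalgamate the $G_i'$ into a single two-terminal series-parallel graph on $V(G)$. I would do this by iterated serial joins at $c$: each join identifies the current pole $c$ of the growing partial union with the pole $c$ of the next $G_i'$; after the join $c$ is internal, so Lemma~\ref{v-decomp} is invoked to re-decompose the partial union with $c$ promoted back to a pole before the next $G_i'$ is attached. This produces a two-terminal series-parallel graph $H$ on $V(G)$ with $G\subseteq H$, and one final use of Lemma~\ref{v-decomp} makes the prescribed $v$ a pole of $H$. The main obstacle is precisely this amalgamation: a block of $G$ may interact with arbitrarily many others at distinct cut vertices, whereas a two-terminal series-parallel graph offers only two poles at a time. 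This is exactly why the strengthened induction hypothesis (prescribing $c$ as a pole of each $G_i'$) and the repeated re-rooting afforded by Lemma~\ref{v-decomp} are both essential to the argument.
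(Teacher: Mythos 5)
Your overall route coincides with the paper's two-sentence argument: the backward direction by induction on the SP-construction, and the forward direction by induction on the block/cut-vertex structure supplied by Lemma~\ref{block0}, with Lemma~\ref{v-decomp} used to re-root the growing graph so that pieces can be amalgamated at a cut vertex by iterated serial joins. The amalgamation itself, the strengthened induction hypothesis, and the reduction of the disconnected case by inserting bridges are all sound (for the last point: no $K_4$-model can cross a bridge, since the bridge is the unique edge of a $1$-edge cut while every nontrivial edge cut of $K_4$ has at least two edges).

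There is, however, one concrete faulty step: the case split ``$G$ is $2$-edge-connected'' versus ``$G$ has a cut vertex.'' Lemma~\ref{block0} is a statement about \emph{blocks}, that is, maximal $2$-connected subgraphs, and a $2$-edge-connected graph need not be $2$-connected, so the lemma does not ``furnish $G$ itself'' in your first case. Indeed the conclusion you draw there is false: take three triangles sharing a single vertex~$c$. This graph is $2$-edge-connected and $K_4$-minor-free, yet it is a two-terminal series-parallel graph for no choice of poles. (In any two-terminal series-parallel graph, every vertex lies on a simple pole-to-pole path --- an immediate induction on the construction; but here some triangle $T$ contains neither pole outside~$c$, and a simple path between the poles visiting a vertex of $T\setminus\{c\}$ would have to pass through the cut vertex~$c$ twice.) The repair is immediate and restores exactly the paper's intended induction: split instead on whether $G$ is $2$-connected. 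If it is, then $G$ is its own block and Lemmas~\ref{block0} and~\ref{v-decomp} apply as you say; if not and $\abs{V(G)}\ge3$, then $G$ has a cut vertex and your second case goes through verbatim --- in particular it absorbs the problematic $2$-edge-connected graphs with cut vertices, such as the example above. (Curiously, the paper's own prose preceding Lemma~\ref{block0} commits the same ``$2$-edge-connected'' slip, but the lemma it actually invokes, stated for blocks, is the correct one.)
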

To see Lemma~\ref{block}, note that spanning subgraphs of two-terminal series-parallel graphs has no $K_4$-minor. The reversed direction is deduced by induction
on the structure of $K_4$-minor-free graph given by Lemma~\ref{block0}
using Lemma~\ref{v-decomp}.

As a consequence, the $K_4$-minor-free graphs are precisely those for which we can
choose two poles such that the two-terminal graph obtained can be constructed from
the two graphs of size two by a series of serial and parallel joins.  The
construction of a particular $K_4$-minor-free graph~$G$ can thus be encoded by
a rooted tree, which is called the \emph{SP-decomposition tree} of~$G$. Each node
of the tree corresponds to a subgraph of~$G$ obtained at a step of the recursive
construction of~$G$. The leaves correspond to graphs with only two poles (and no
other vertex) that may or may not be connected by an edge.  Each inner node of
the tree corresponds to either a serial join or to a parallel join.  Based on
this, there are two types of inner nodes: \emph{S-nodes} and \emph{P-nodes}. The
inner nodes have at least two children: the subgraphs corresponding to their
children are joined together by a sequence of serial or parallel joins depending
on the type of the node. Since the result of a sequence of serial joins depends on
the order in which the serial joins are applied, the children of each inner node
are ordered. Without loss of generality, we can assume that the children of
a P-node are S-nodes and leaves only, and the children of an S-node are P-nodes
and leaves only.

If~$A$ is a two-terminal graph,
the vertices of~$A$ distinct from its poles are said to be its \emph{inner}
vertices. The set of inner vertices of~$A$ is $\IN(A)$.  We define~$\wi(A)$, the
\emph{width} of~$A$, to be the number of inner vertices of~$A$, that is,
$\wi(A)=\abs{\IN(A)}$ (note that $\wi(A)=\abs{V(A)}-2$).  We introduce some
terminology for particular two-terminal $K_4$-minor-free graphs.  A two-terminal
graph obtained by a parallel join of several two-edge paths is a \emph{diamond}.
A two-terminal graph obtained by a parallel join of several two-edge paths and an
edge is a \emph{crystal}. Observe that an edge may be seen as a crystal of
width~$0$. If $i$ is a positive integer, we define~$D(i)$ to be the diamond with
width~$i$ and~$C(i)$ to be the crystal with width~$i$. Let~$D'(1)$ be the
graph~$K_{1,3}$ with two vertices of degree~$1$ as poles. For~$i \geq 2$,
we define~$D'(i)$ to be the graph obtained by a parallel join of~$D'(1)$
with $i-1$ paths of length~$2$. Let~$C'(i)$ be obtained from~$D'(i)$ by adding an
edge between the poles.  We let~$P_i$ be the path with~$i$ vertices.  If $G$ is
a graph and~$U$ a subset of the vertices of~$G$, we let~$G-U$ be the subgraph
of~$G$ induced by the vertices of~$G$ that do not belong to~$U$. For a positive
integer~$k$, we take the representatives of~$\mathbf{Z}_k$ to be~$\{1,\dotsc,k\}$,
rather than the more common~$\{0,\dotsc,k-1\}$.  An \emph{equitable $k$-coloring}
of a graph~$G$ is a mapping~$\alpha\colon V(G)\to\mathbf{Z}_k$ such that
$\abs{\alpha^{-1}(\{i\})}$ and~$\abs{\alpha^{-1}(\{j\})}$ differ by at most one for every
$(i,j)\in\mathbf{Z}_k^2$.

The next lemma is a simple but useful remark about common neighbors of the poles
of a $K_4$-minor-free graph.
\begin{lemma}
  If~$H$ is a $K_4$-minor-free graph with poles~$a$ and~$b$, then 
  $N_H(a)\cap N_H(b)$ is an independent set of~$H$.
  \label{lem:independent}
\end{lemma}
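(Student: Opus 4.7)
The plan is to proceed by induction on the number of vertices of~$H$, working along an SP-decomposition tree in which~$a$ and~$b$ are the poles of the root. The existence of such a decomposition is granted by Lemmas~\ref{block0} and~\ref{block} applied to the block containing both~$a$ and~$b$; when~$a$ and~$b$ lie in distinct blocks of~$H$, the claim is immediate because any common neighbour would have to be the unique cut vertex on the block-tree path between them, so that $N_H(a)\cap N_H(b)$ has cardinality at most~one.

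The base case $V(H)=\{a,b\}$ is trivial. For the inductive step, suppose first that the root is an S-node, so that $H=S(G_1,G_2)$ with~$G_1$ having poles~$a$ and~$v$, and~$G_2$ having poles~$v$ and~$b$, for some vertex~$v$. Then $b\notin V(G_1)$ and $a\notin V(G_2)$, which yields $N_H(a)\subseteq V(G_1)$ and $N_H(b)\subseteq V(G_2)$; since $V(G_1)\cap V(G_2)=\{v\}$, we deduce $N_H(a)\cap N_H(b)\subseteq\{v\}$, which is trivially independent.

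Now suppose the root is a P-node, so that $H=P(G_1,\dotsc,G_k)$ where each~$G_i$ has poles~$a$ and~$b$, and $V(G_i)\cap V(G_j)=\{a,b\}$ whenever $i\neq j$. Every common neighbour of~$a$ and~$b$ is distinct from~$a$ and~$b$, hence lies in a unique~$G_i$, where it remains a common neighbour of the poles. The inductive hypothesis applied to each~$G_i$ gives that $N_{G_i}(a)\cap N_{G_i}(b)$ is independent in~$G_i$. Moreover, vertices belonging to distinct $G_i$'s are never adjacent in~$H$, since every edge of~$H$ lies within a single~$G_\ell$ and the only shared vertices between distinct $G_\ell$'s are~$a$ and~$b$. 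Taking the disjoint union yields that $N_H(a)\cap N_H(b)$ is independent in~$H$.

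The main mild difficulty lies in the initial set-up, namely in ensuring that both~$a$ and~$b$ can be placed as poles of the root of the decomposition tree. Once this is arranged, the structure of series and parallel joins forces the rest: common neighbours either concentrate at the unique gluing vertex (S-case) or scatter across the parallel parts without interacting across different parts (P-case).
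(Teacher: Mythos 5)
Your inductive core---trivial base case, the S-case concentrating common neighbours at the shared pole, the P-case combining the induction inside each part with the absence of cross-part edges---is essentially the paper's proof (the paper phrases a marginally stronger invariant, namely that no two common neighbours of~$a$ and~$b$ lie in the same component of~$H\setminus\{a,b\}$, but your direct adjacency argument carries the same content). The genuine gap is exactly where you locate the ``main mild difficulty'': the set-up. You claim that Lemmas~\ref{block0} and~\ref{block} provide an SP-decomposition tree whose root has \emph{both} $a$ and~$b$ as poles, but Lemma~\ref{v-decomp} only allows one pole to be prescribed, and prescribing both is impossible in general. Take $H$ to be $K_4$ minus an edge, with the two nonadjacent vertices as $a$ and~$b$ and the other two vertices $c$ and~$d$: this graph is $2$-connected and $K_4$-minor-free, yet it admits no two-terminal series-parallel structure with poles $a$ and~$b$ (a series join at~$c$ would force~$d$, adjacent to both poles, to lie on both sides, and symmetrically for a join at~$d$; a parallel join cannot separate $c$ from~$d$ because of the edge~$cd$, and $ab\notin E(H)$ so no nontrivial parallel split exists). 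Worse, the lemma itself is \emph{false} under your reading of the hypothesis: in this example $N_H(a)\cap N_H(b)=\{c,d\}$ and $cd\in E(H)$. So no proof can succeed for an arbitrary pair of vertices of a $K_4$-minor-free graph; the hypothesis ``with poles $a$ and~$b$'' must be read as saying that $H$ is \emph{given} together with a two-terminal decomposition structure whose poles are $a$ and~$b$. This is the case in every application in the paper, where $H$ is a graph represented by a construction subtree of~$T$ and its poles are imposed by the tree; the paper's proof accordingly inducts on the given SP-decomposition tree and needs no existence step (note also that leaves of such trees may be edgeless, so your block-tree preliminaries are beside the point there).

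Once the decomposition is taken as given rather than constructed, your induction goes through, up to one small repair: inducting on~$\abs{V(H)}$ can stall, because a parallel join may have a part with no inner vertices (a leaf representing the edge~$ab$), in which case another part has exactly as many vertices as~$H$ itself. Induct instead on the number of nodes of the SP-decomposition tree, as the paper does; with that measure every case of your argument strictly decreases and the proof is complete.
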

\begin{proof}
  We prove by induction on the number of vertices of the SP-decomposition tree of~$H$ that
  no two vertices in~$N_H(a)\cap N_H(b)$ belong to a same component
  of~$H\setminus\{a,b\}$.
  \begin{itemize}
  \item The statement is trivial if the SP-tree has only one node, that is if $H$
        has two vertices.
  \item If $H$ is the series join of~$H_1$ and~$H_2$, then the only possible
        common neighbor of~$a$ and~$b$ is the common pole of~$H_1$ and~$H_2$. The
        statement is therefore true in this case also.
  \item If~$H$ is the parallel join of~$H_1$ and~$H_2$, then let $x$ and~$y$ be
        two common neighbors of~$a$ and~$b$. Either $x$ and~$y$ belong to~$H_i$
        for some~$i\in\{1,2\}$, in which case the result follows from the
        induction hypothesis applied on~$H_i$; or $x$ and~$y$ are in different
        components of~$H \setminus \{a,b\}$.
    \end{itemize}
\end{proof}

Let~$T$ be an SP-decomposition tree (of a $K_4$-minor-free graph),
and~$n$ be a node of~$T$ representing
the subgraph~$H$ with poles~$a$ and~$b$. Assume that $H-\{a,b\}$
has~$m$ components~$C^1,\dotsc,C^m$.
The node~$n$ is in \emph{normal form} if $m\le1$ (\emph{i.e.}~$H-\{a,b\}$ either is connected or
has no vertex at all), or
if $n$ is a parallel node with children~$H^1,\dotsc,H^m$ plus the edge~$ab$
if $ab \in E(H)$,
where $H^i$ is the subgraph of~$H$ induced by~$C^i\cup\{a,b\}$ from which we
remove the edge~$ab$ if it is present.
The tree~$T$ is in \emph{normal form} if every node of~$T$ is in
normal form.
\begin{lemma}\label{lem:decomposition}
  If~$G$ is a $K_4$-minor-free graph,
  then~$G$ admits a construction tree~$T$ in normal form.
\end{lemma}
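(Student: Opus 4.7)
The plan is to start from any construction tree of $G$ and transform it into normal form by a sequence of local operations. By Lemma~\ref{block}, the graph $G$ is a spanning subgraph of a two-terminal series-parallel graph $\tilde{G}$; unfolding the recursive definition of $\tilde{G}$ supplies an SP-decomposition tree of $\tilde{G}$ whose leaves are all edges, and relabeling each leaf corresponding to an edge of $E(\tilde{G})\setminus E(G)$ as a non-edge leaf yields a construction tree $T$ of $G$ itself.

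I would then normalize $T$ in three steps. First, put $T$ in the alternation form described before the lemma: by associativity of the parallel (respectively, serial) join, any P-node that is the child of a P-node can be merged into its parent, and similarly for S-nodes; after this, the children of every P-node are S-nodes or leaves, and the children of every S-node are P-nodes or leaves. Second, for each P-node, delete its non-edge leaf children, since each such child represents the trivial two-terminal graph on the two poles and contributes no vertex and no edge to the parallel join. Third, if one of these deletions leaves a P-node with a single remaining child, contract that node with its unique child.

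It remains to verify the normal form condition at every node of the resulting tree. Leaves trivially satisfy $m = 0$. For an S-node obtained as a serial join of $k \geq 2$ children, the graph $H - \{a,b\}$ is connected through the identified intermediate vertices, so $m \leq 1$. For a P-node representing $H$ with S-node children $H^1, \dotsc, H^s$ and possibly one edge-leaf for $ab$, each $\IN(H^j)$ is connected by the previous case and any two distinct such children share only the poles, so $\{\IN(H^j)\}_{j=1}^s$ is precisely the family of components of $H - \{a,b\}$; moreover, the only way an edge of $H$ between two vertices of $C^j \cup \{a,b\}$ can come from outside $H^j$ is as the edge $ab$ contributed by a sibling edge-leaf, so $H[C^j \cup \{a,b\}]$ with the edge $ab$ removed (if present) coincides with $H^j$, as required. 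The main obstacle is precisely this last identification, which relies both on S-nodes being serial joins of at least two subgraphs (ensuring a connected inner graph) and on parallel joins sharing only the two poles.
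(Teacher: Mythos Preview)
Your argument has a genuine gap: the claim that an S-node automatically has $m\le1$ is false. Take $H=S(L_1,L_2,L_3)$ with poles~$a$ and~$b$, where $L_1$ is the edge on $\{a,x\}$, $L_2$ is the non-edge on $\{x,y\}$, and $L_3$ is the edge on $\{y,b\}$. This tree is already in alternation form and survives your three normalisation steps unchanged, yet $H-\{a,b\}$ consists of the two isolated vertices~$x$ and~$y$, so $m=2$. Nothing in a serial join forces an intermediate pole to be reachable from the next one by a path avoiding the outer poles~$a,b$; your phrase ``connected through the identified intermediate vertices'' assumes exactly this. The same $H$, used as an S-node child of a P-node (say $P(H,ab)$), also breaks your P-node verification: $\IN(H)$ has two components, so the single child~$H$ does not correspond to a single component~$C^j$, and the P-node fails the normal-form condition.

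This is precisely the difficulty that the paper's proof confronts. When processing an S-node $S(G_1,G_2)$ with middle pole~$c$, the paper does not merely recurse on the children; it classifies the components of $G_i$ minus its two poles according to whether they contain a neighbour of~$c$. Components with no neighbour of~$c$ become separate components of $G-\{a,b\}$ and are lifted out as new siblings under a freshly created P-node at the root, while the components that do see~$c$ are re-bundled and joined serially through~$c$. Your local rewrites (associativity, deleting non-edge leaves under P-nodes, contracting unary P-nodes) never move a subgraph from beneath an S-node to a sibling P-position, so they cannot produce a normal-form tree in general.
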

\begin{proof}
  As a $K_4$-minor-free graph, $G$ has two vertices~$a$ and~$b$ and
  an SP-decomposition tree~$T$ that represents the two-terminal graph~$G$
  with poles~$a$ and~$b$. Note that we may assume that~$T$ is a binary tree
  (where P-nodes and S-nodes may not alternate).

  To prove the lemma,
  we describe an inductive procedure that transform the (binary) SP-decomposition
  tree~$T$ into an SP-decomposition tree~$T'$ in normal form
  that represents the same graph~$G$.
  Assume that this procedure exists for trees with fewer nodes than~$T$.
  If~$n$ is a leaf, then~$G$ has two vertices and further
    $V(G)-\{a,b\}$ is empty, so~$n$ is in normal form indeed.
  So we now suppose that $n$ has two children representing the graphs~$G_1$
  and~$G_2$, respectively. By induction, for each $i\in\{1,2\}$ there is
  a tree~$T_i$ in normal form that represents~$G_i$.
We distinguish two cases depending on the type of the root~$n$ of~$T$.
  \begin{itemize}
  \item Suppose that $n$ is a P-node, so $G=P(G_1,G_2)$.
    Let~$C_i^1,\dotsc,C_i^{m_i}$ be the components of~$G_i-\{a,b\}$, and note
    that $m_i$ is a positive integer.
    If~$m_i=1$, then we set $H_i^1\coloneqq G_i$.
    If~$m_i \geq 2$, then according to the definition of normal forms
    the graph $G_i$ is encoded in~$T_i$
    by the parallel join of $H_i^1,\dots,H_i^{m_i}$, plus possibly the edge~$ab$.
    (We recall that it means that each graph~$H_i^j$ is
    the subgraph of~$G_i$ induced by $C_i^j\cup\{a,b\}$ from which the edge~$ab$ is deleted
    if it is present.)
    The sought SP-decomposition tree~$T'$ is then obtained by making
    a new P-node~$n$ the parent node of each of the SP-decomposition
    trees representing $H_1^1,\dots,H_1^{m_1},H_2^1,\dots,H_2^{m_2}$ (each of them
    in normal form), and, possibly,
    of a leaf representing an edge if $ab \in E(G)$.

  \item Suppose that $n$ is an S-node, so $G=S(G_1,G_2)$.
    First note that $ab \notin E(G)$.
    Let~$c$ be the common pole of~$G_1$ and~$G_2$.
    Let~$C_1^1,\dotsc,C^{k_1}_1$ be the components
    of~$G_1-\{a,c\}$ that contain a neighbor of~$c$ and
    let~$C^{k_1+1}_1,\dotsc,C^{m_1}_1$ be the other components of
    $G_1-\{a,c\}$. We define analogously the components~$C_2^1,\dotsc,C^{m_2}_2$
    and the index~$k_2$ with respect to~$G_2-\{b,c\}$.
    For each~$j\in\{1,\dotsc,m_1\}$, we define~$H_1^j$ to be the subgraph of~$G$ corresponding to the
    component~$C_1^j$ of~$G_1-\{a,c\}$ as in the definition of normal forms.
    The graphs~$H_2^1,\dotsc,H_2^{m_2}$ are defined analogously
    with respect to~$G_2-\{b,c\}$.

    According to the definition of normal forms, either $H_i^1=G_i$ or,
    in~$T_i$, the graph~$G_i$ is represented by~$P(H_i^1,\dotsc,H_i^{m_i})$.
    Note that the components of~$G-\{a,b\}$ are exactly
    $C^{k_1+1}_1,\dotsc,C^{m_1}_1,C^{k_2+1}_2,\dotsc,C^{m_2}_1$ and
    $\{c\}\cup(\bigcup_{j=1}^{k_1}C^j_1)\cup(\bigcup_{j=1}^{k_2}C^j_2)$.
    Based on this, the sought tree~$T'$ is the tree with a P-node as a root,
    whose children are the SP-decomposition trees
    representing~$H_1^{k_1+1},\dotsc,H_1^{m_1},H_2^{k_2+1},\dotsc,H_2^{m_2}$
    and~$S(F_1,F_2)$, where~$F_i\coloneqq P(H_i^1,\dotsc,H_i^{k_1})$
    for~$i\in\{1,2\}$ (each of them in normal form).
    It follows from the construction that the node~$n$ is in normal form,
    hence so is the tree~$T'$. This concludes the proof.
\end{itemize}
\end{proof}

\section{Reductions}\label{sec:red} 
We note that the statement of Theorem~\ref{thm:main} is true if $k\le2$, since
then $\Delta\in\{0,1\}$. So from now on we assume that $k\ge3$.
We fix a minimal counter-example~$(G,k)$, where $k\ge\lceil\frac{\Delta(G)+3}{2}\rceil$,
along with an SP tree-decomposition~$T$ of~$G$ with every node in normal form
(Lemma~\ref{lem:decomposition} ensures that this is possible).
It follows that $k<\abs{V(G)}$, as any graph~$H$
admits an equitable $t$-coloring if $t\ge\abs{V(H)}$.  We may also assume that $G$
is connected. As a consequence, every component of a subgraph of~$G$
with poles~$a$ and~$b$ that is represented by a subtree of~$T$ contains~$a$
or~$b$. A subtree~$T'$ of~$T$ is a \emph{construction subtree} if $T'$ is rooted
at a node~$r$ of~$T$ and $T'-\{r\}$ consists of at least two subtrees of~$T-\{r\}$ containing
children of~$r$ such that if~$r$ is an $S$-nodes, then all these children
are consecutive around~$r$ in~$T$.

Throughout this section, each time a coloring~$c$ is obtained by
induction (or, equivalently, by a minimality argument), we assume the colors to be
ordered increasingly, that is, such that $\abs{\alpha^{-1}(\{i\})} \leq
\abs{\alpha^{-1}(\{j\})}$ for every two colors~$i$ and~$j$ with~$i<j$. (This condition
implies that if we consider a $k$-coloring~$\alpha$ of an $n'$-vertex graph with
$n'<k$, then the colors used by~$c$ are precisely~$k,k-1,\dotsc,k-n'$, each being used
exactly once.)
\begin{lemma}
  The graph~$G$ has no construction subtree representing a subgraph~$C(k-1)$ or~$D(k-1)$.
  \label{lem:treeC}
\end{lemma}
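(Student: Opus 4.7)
The plan is to derive a contradiction by reducing $G$ to a smaller $K_4$-minor-free graph. Suppose that $G$ admits a construction subtree representing a subgraph $H\in\{C(k-1),D(k-1)\}$ with poles $a$ and $b$. Since this is a construction subtree, the $k-1$ inner vertices of $H$ have no neighbors in $V(G)\setminus\{a,b\}$, and by Lemma~\ref{lem:independent} they form an independent set. Set $G'\coloneqq G-\IN(H)$: this graph is $K_4$-minor-free, has $\Delta(G')\le\Delta(G)\le 2k-3$, and $\abs{V(G')}<\abs{V(G)}$. The minimality of the counter-example $(G,k)$ thus yields an equitable $k$-coloring $\alpha$ of $G'$ whose color classes have non-decreasing sizes $s_1\le\dots\le s_k$ with $s_k-s_1\le 1$. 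My aim is to extend $\alpha$ to an equitable $k$-coloring of $G$, contradicting the assumption that $G$ is a counter-example.

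To extend, each of the $k-1$ inner vertices must receive a color in the palette $[k]\setminus\{\alpha(a),\alpha(b)\}$, and I would analyse two cases. If $\alpha(a)=\alpha(b)$---which can only happen when $ab\notin E(G)$, hence when $H=D(k-1)$---there are $k-1$ available colors, which I distribute bijectively among the $k-1$ inner vertices; the class of $\alpha(a)$ then stays fixed while every other class grows by one, and a direct check on $\abs{V(G')}\bmod k$ shows the extension is equitable on $G$, up to a possible relabeling of colors. If instead $\alpha(a)\ne\alpha(b)$, only $k-2$ colors are available for $k-1$ inner vertices, so exactly one color $c^{\star}\notin\{\alpha(a),\alpha(b)\}$ is used twice while the remaining $k-3$ available colors each receive one inner vertex. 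A small computation shows that the resulting sizes satisfy $\max-\min\le1$ precisely when both $\alpha(a)$ and $\alpha(b)$ lie in a maximum-size class of $\alpha$ (so that $s_{\alpha(a)}=s_{\alpha(b)}=s_k$) and the doubled color $c^{\star}$ lies in a minimum-size class.

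The main obstacle is ensuring that $\alpha$ can be chosen so that $s_{\alpha(a)}=s_{\alpha(b)}=s_k$, since nothing in the minimality argument controls which classes of $\alpha$ the poles fall into. My plan is to use Kempe-chain exchanges inside $G'$ to move $a$ and $b$ into maximum classes: swapping two colors along the Kempe component containing $a$ (respectively~$b$) preserves properness and, when the component contains equally many vertices of the two colors, preserves the size multiset of $\alpha$. The degree bound $\Delta(G')\le 2k-3$, combined with the fact that when $H=C(k-1)$ both poles have degree at least~$k$ in $G$ and hence at most $k-3$ in $G'$, leaves at least three colors unused in the neighborhood of $a$ in $G'$, which is enough freedom to find a suitable swap that does not affect $b$. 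The genuinely delicate case is when $\abs{V(G')}$ is divisible by $k$ and $H=C(k-1)$: then all classes of $\alpha$ already have the same size and $ab\in E(G)$ forces $\alpha(a)\ne\alpha(b)$, so no Kempe swap can create a strictly larger class into which to place $\alpha(a)$ or $\alpha(b)$. This corner case is where the argument is most subtle; resolving it requires working not from a strictly equitable coloring but from a near-equitable recoloring of $G'$ whose one-unit imbalance is exactly absorbed when the $k-1$ inner vertices of $H$ are reinserted.
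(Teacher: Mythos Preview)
Your reduction by deletion runs into a genuine obstacle that you do not resolve: you have no control over which color classes the poles~$a$ and~$b$ land in, and this matters even in the case you treat as easy. When $\alpha(a)=\alpha(b)$ and $\abs{V(G')}\not\equiv 0\pmod k$, if $a$ happens to lie in a minimum-size class of~$\alpha$, then after your bijective distribution the class of~$\alpha(a)$ keeps size~$s_1$ while some other class reaches~$s_k+1=s_1+2$; the extension is not equitable. A relabeling of colors cannot help, since a permutation of color names does not change the size of the class containing~$a$. Your Kempe-chain repair is only a sketch---there is no reason the Kempe component through~$a$ should contain equally many vertices of the two colors, and an unbalanced swap destroys equitability of~$\alpha$---and you explicitly leave the case $\abs{V(G')}\equiv 0\pmod k$ with $H=C(k-1)$ unresolved, merely gesturing at a ``near-equitable recoloring''. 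As written, the argument is incomplete.

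The paper sidesteps all of this in one stroke: instead of deleting~$\IN(H)$ it \emph{contracts} the whole of~$V(H)$ to a single vertex~$c$. Since $\abs{V(H)}=k+1$, the contracted graph~$F$ has exactly $\abs{V(G)}-k$ vertices; moreover $d_F(c)\le d_G(a)+d_G(b)-2(k-1)\le 2k-4\le\Delta$, so~$F$ is still a valid instance. An equitable $k$-coloring~$\alpha$ of~$F$ is then lifted to~$G$ so that the multiset of colors on~$V(H)$ equals $\{\alpha(c),1,\dotsc,k\}$: every color class grows by exactly one, making equitability automatic and eliminating any need to know where~$a$ and~$b$ sit. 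If $ab\notin E(G)$ one sets $\alpha'(a)=\alpha'(b)=\alpha(c)$ and spreads the remaining $k-1$ colors over~$\IN(H)$; if $ab\in E(G)$, the poles have at most $\Delta-k\le k-3$ (for~$a$) and then $k-2$ (for~$b$, including~$a$) already-colored neighbors, so each can be properly colored avoiding~$\alpha(c)$, and the inner vertices absorb the leftover multiset. The contraction is precisely what makes the bookkeeping trivial; your deletion loses one vertex too few.
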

\begin{proof}
  Suppose, on the contrary, that $H$ is such a subgraph of~$G$.  Let~$a$ and~$b$
  be the poles and~$v_1,\dotsc,v_t$ the inner vertices of~$H$.  Let~$F$ be the
  graph constructed from~$G$ by contracting~$V(H)$ to a vertex~$c$, removing
  parallel edges and loops when they occur. Note that $F$ has no $K_4$-minor. In
  addition, $d_F(c) \leq d_G(a) + d_G(b) - 2(k-1) \leq 2k-4$.  By the minimality
  of~$G$, there is an equitable $k$-coloring~$\alpha$ of~$F$.
  Define~$\alpha'(v)\coloneqq\alpha(v)$ for~$v \in V \setminus V(H)$.  Note that
  $\alpha'$ is a partial proper coloring of~$G$, that is, a proper coloring
  defined on a subset of~$V(G)$.  To finish the proof, it suffices to
  extend~$\alpha'$ to a proper coloring of~$G$ such that the
  multisets~$\{\alpha'(a),\alpha'(b),\alpha'(v_1),\dotsc,\alpha'(v_k)\}$
  and~$\{\alpha(c),1,\dotsc,k\}$ are equal. (Note that in this latter multiset one
  color has multiplicity two --- namely $\alpha(c)$ --- and $k-1$ colors have
  multiplicity one.) We now distinguish two cases.
  \begin{itemize}
  \item If $ab \notin E$, then we
        set~$\alpha'(a)\coloneqq\alpha'(b)\coloneqq\alpha(c)$ and we
        color~$v_1,\dotsc,v_k$ using all the elements of the
        set~$\{1,\dotsc,k\}\setminus\{\alpha(c)\}$.
  \item If $ab \in E$, then $a$ has at most~$\Delta - k \leq k-3$ colored
        neighbors. So $a$ can be properly colored with a color~$\alpha'(a)$
        different from~$\alpha(c)$. Similarly, $b$ has at most~$k-2$ colored
        neighbors (including~$a$), so $b$ can be properly colored with
        a color~$\alpha'(b)$ different from~$\alpha(c)$ (and from~$\alpha'(a)$).
        Now, we color~$v_1,\dotsc,v_k$ using the elements of the
        multiset~$\{\alpha(c),\alpha(c),1,\dotsc,k\}\setminus\{\alpha'(a),\alpha'(b)\}$,
        with the corresponding multiplicities.
  \end{itemize}
\end{proof}
\begin{corollary}
  For every integer~$t\geq k-1$,
  the graph~$G$ has no construction subtree
  representing a subgraph~$C(t)$ or~$D(t)$.
  \label{lem:treeC:2}
\end{corollary}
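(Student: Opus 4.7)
The plan is a direct reduction to Lemma~\ref{lem:treeC}. Suppose, for the sake of contradiction, that $T'$ is a construction subtree of $T$ representing a subgraph $H$ isomorphic to $C(t)$ or $D(t)$ with $t\ge k-1$. Since a crystal or diamond is obtained by a parallel join of several two-edge paths (and, in the crystal case, one additional edge), the root $r$ of $T'$ is a P-node, and the child-subtrees of $r$ that belong to $T'$ consist of exactly $t$ subtrees representing two-edge paths --- one per inner vertex of $H$ --- plus, in the crystal case, one leaf representing the edge $ab$.

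The strategy is then to prune $T'$ down to a smaller construction subtree of width precisely $k-1$. In the diamond case, I would retain $r$ together with any $k-1$ of the $t$ path-representing child-subtrees, producing a subtree $T''$ representing $D(k-1)$. In the crystal case, I would retain $r$ together with the edge-leaf and any $k-1$ of the $t$ path-representing child-subtrees, producing a $T''$ representing $C(k-1)$. Because $r$ is a P-node, the consecutivity clause in the definition of a construction subtree is vacuous, so any choice of at least two child-subtrees yields a valid construction subtree of $T$; and since $k\ge 3$, both constructions above select at least two child-subtrees.

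Applying Lemma~\ref{lem:treeC} to $T''$ yields the required contradiction, since $T''$ is a construction subtree representing $C(k-1)$ or $D(k-1)$. There is essentially no obstacle here beyond verifying that restricting the set of children of a P-node preserves the status of being a construction subtree, which is immediate from the definition. In particular, no induction on $t$ is needed: a single pruning step reduces every case $t\ge k-1$ to the already-established case $t=k-1$.
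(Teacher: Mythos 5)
Your proof is correct and follows essentially the same route as the paper's: both identify the root as a P-node whose relevant children represent $P_3$'s (a fact that in the paper is justified by the normal-form assumption on~$T$, which you should cite explicitly rather than appeal to the non-unique decomposition of a crystal or diamond), and both prune to $k-1$ of these children to invoke Lemma~\ref{lem:treeC}. The only cosmetic difference is that in the crystal case you keep the edge-leaf to obtain $C(k-1)$, whereas the paper always drops it and reduces to $D(k-1)$; both variants are covered by Lemma~\ref{lem:treeC}.
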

\begin{proof}
  Assume otherwise that~$H$ is such a subgraph of~$G$.
  Let~$a$ and~$b$ be the poles of~$H$.
  Let~$n$ be the root of the construction subtree that represents~$H$.
  Since~$n$ is in normal form and $H-\{a,b\}$ is an independent set of size~$t$,
  the node~$n$ is a parallel node with at least~$t$ children representing a
  path~$P_3$
  with end vertices~$a$ and~$b$ (the node~$n$ may have other children as well).
  Choosing~$n$ as a root along with~$k-1$ of the children of~$n$ representing
  a~$P_3$ yields a construction subtree of~$T$ that represents~$D(k-1)$,
  which contradicts Lemma~\ref{lem:treeC}.
\end{proof}
\begin{lemma}
  If a construction subtree of~$T$ represents a graph~$H$ with $1\leq\wi(H)\leq k$,
  then $\IN(H)$ is dominated by a pole of~$H$
  unless $\wi(H)\ge2$ and $H\in\{C'(t),D'(t)\}$, where~$t=\wi(H)-1$.
  \label{lem:dominates}
\end{lemma}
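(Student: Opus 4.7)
The plan is to proceed by induction on the size of the construction subtree $T'$ representing $H$. Write $r$ for the root of $T'$ and $a,b$ for the poles of $H$. The base case $\wi(H)=1$ is immediate: the unique inner vertex must be adjacent to at least one pole, so that pole dominates $\IN(H)$ and no exception is needed.

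For the inductive step I split on the type of $r$. If $r$ is a P-node, then $H$ is the parallel join of the graphs $H_1,\dotsc,H_m$ represented by the selected children, possibly together with the edge $ab$. If every $H_i$ has width at most $1$ then $H$ is a diamond $D(m)$ or a crystal $C(m)$; Corollary~\ref{lem:treeC:2} forces $m\le k-2$, and every inner vertex is adjacent to both poles. Otherwise some child $H_i$ has $\wi(H_i)\ge 2$, so the induction hypothesis applies to it: either a pole dominates $\IN(H_i)$ or $H_i\in\{C'(t_i),D'(t_i)\}$. Because the poles of every child are identified with $a$ and $b$, a pole that dominates every $\IN(H_i)$ dominates $\IN(H)$. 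The delicate point is when an exception $D'(t_i)$ or $C'(t_i)$ appears: its pendant vertex is inner in $H$ too, and I plan to show that in that case the remaining children can only be copies of the edge $ab$ or of the length-$2$ path, which forces $H$ itself to have the form $C'(t)$ or $D'(t)$ with $t=\wi(H)-1$.

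If $r$ is an S-node, then $H=S(H_1,\dotsc,H_s)$ with $s\ge 2$ and intermediate poles $c_1,\dotsc,c_{s-1}$ that are inner in $H$. Since $a$ is adjacent only to vertices of $H_1$ and $b$ only to vertices of $H_s$, as soon as $s\ge 3$, or as soon as both $H_1$ and $H_s$ contain inner vertices, no pole of $H$ can dominate $\IN(H)$. For any such configuration that does not belong to the exception family, I plan to derive a contradiction with the minimality of $(G,k)$ following the strategy of Lemma~\ref{lem:treeC}: contract the vertex set of $H$ to a single vertex $c$, obtain a smaller $K_4$-minor-free graph whose new vertex has bounded degree into the rest of $G$, equitably $k$-colour the quotient by minimality, and extend the colouring back to $G$ using the available colours on $\IN(H)$.

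The main obstacle is the extension step in the S-node case. Unlike the symmetric configurations $C(t)$ and $D(t)$, the inner vertices of a series-joined $H$ are not all adjacent to both poles, so palette-rotation on $\IN(H)$ is not free and the multiplicities of the colour classes must be tracked carefully. I expect the proof to identify exactly the obstructions to a safe extension and to argue that the only ones that survive the reductions already performed are $C'(t)$ and $D'(t)$, whose pendant vertex is a degree-$1$ inner vertex whose single neighbour's colour must be avoided. Classifying these obstructions and showing that every other S-node configuration admits a safe extension is the technical heart of the argument.
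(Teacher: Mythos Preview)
Your plan has two genuine gaps, and the paper in fact takes a much shorter, non-inductive route.

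\textbf{The S-node contraction does not control the degree.}  You propose to contract all of~$V(H)$ to a single vertex~$c$ ``following the strategy of Lemma~\ref{lem:treeC}''.  In Lemma~\ref{lem:treeC} this works only because $H\in\{C(k-1),D(k-1)\}$ forces $d_H(a),d_H(b)\ge k-1$, whence $d_F(c)\le d_G(a)+d_G(b)-2(k-1)\le 2k-4\le\Delta$.  For a generic series join this fails: if $H$ is, say, the path $a\text{--}x\text{--}y\text{--}b$ then $d_H(a)=d_H(b)=1$, and the contracted vertex can have degree up to $d_G(a)+d_G(b)-2\le 2\Delta-2$, which need not be at most~$\Delta$.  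So the minimality hypothesis cannot be invoked on the quotient, and the whole S-node branch collapses.

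\textbf{The P-node induction does not close.}  Even when no exceptional child occurs, different children may be dominated by different poles (say $a$ dominates $\IN(H_1)$ and $b$ dominates $\IN(H_2)$); then neither pole dominates $\IN(H)$ and~$H$ is not of the form $C'(t)$ or $D'(t)$.  Your sketch only covers ``a pole that dominates every $\IN(H_i)$'', so this case is simply missing.  Likewise, the claim that if one child is $C'(t_i)$ or $D'(t_i)$ then the remaining children must be edges or length-$2$ paths is not justified and is not true without further argument using the minimality of~$G$.

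The paper avoids all of this.  It does no induction on~$T$: it assumes that each pole has a non-neighbour in $\IN(H)$, picks $a'\notin N(a)$ and $b'\notin N(b)$ with $a'\neq b'$ whenever possible, and \emph{deletes} $\IN(H)$ (adding the edge~$ab$) rather than contracting~$H$.  The smaller graph is equitably $k$-coloured by minimality, and the extension is immediate: set $\alpha(a')\coloneqq\alpha(a)$ if $\alpha(a)\le\wi(H)$, set $\alpha(b')\coloneqq\alpha(b)$ if $\alpha(b)\le\wi(H)$, and spread the remaining colours over the other inner vertices.  The only case where one cannot take $a'\neq b'$ is when $\IN(H)\setminus N(a)=\IN(H)\setminus N(b)$ is a single vertex; then Lemma~\ref{lem:independent} together with connectedness forces $H\in\{C'(t),D'(t)\}$.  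The whole proof is a paragraph, and the degree issue never arises because deleting vertices can only decrease degrees.
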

\begin{proof}
  Assume that each of the poles~$a$ and~$b$ of~$H$
  has a non-neighbor in~$\IN(H)$, which we name~$a'$ and~$b'$, respectively.
  Note that it is possible to ensure that $a'\neq b'$ unless
  $\IN(H)\setminus N(a)=\IN(H)\setminus N(b)=\{a'\}$.
  In this latter case, since each component of~$H$ contains~$a$ or~$b$ as reported
  earlier, we deduce that $H$ is connected. It then follows from
  Lemma~\ref{lem:independent} that~$H$ is equal to either~$C'(t)$ or~$D'(t)$, with
  $t=\wi(H)-1\ge1$.

  We now assume that $a'\neq b'$, which yields to a contradiction. Indeed,
  let~$F$ be the graph~$G-\IN(H)$ to which we add the edge~$ab$ if it is
  not already present. By the minimality of~$G$ there is an equitable
  $k$-coloring~$\alpha$ of~$F$. To obtain a contradiction, it suffices
  to extend~$\alpha$ to a proper coloring of~$G$ such
  that~$\sst{\alpha(v)}{v\in\IN(H)}$ equals~$\{1,\dotsc,\wi(H)\}$.
  (We recall that the colors are increasingly ordered.)
  
  To do so, we define~$\alpha(a')\coloneqq\alpha(a)$ if $\alpha(a)\leq\wi(H)$ and
  $\alpha(b')\coloneqq\alpha(b)$  if $\alpha(b)\leq\wi(H)$ and we arbitrarily
  assign the colors of $\{1,\dotsc,\wi(H)\}\setminus\{\alpha(a),\alpha(b)\}$ to
  the non-colored vertices, each color being assigned once.
\end{proof}
Our next statement is a direct consequence of~Lemma~\ref{lem:dominates}.
\begin{corollary}
  If a construction subtree of~$T$ represents a graph~$H$ with~$\wi(H)\leq k$,
  then the subgraph induced by~$\IN(H)$ is a forest.
  \label{cor:treetree}
\end{corollary}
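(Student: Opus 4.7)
The plan is to apply Lemma~\ref{lem:dominates} directly to $H$ and show that in every outcome of that lemma, the induced subgraph $G[\IN(H)]$ cannot contain a cycle. The hypothesis $\wi(H)\le k$ is exactly what the lemma requires (the degenerate case $\wi(H)=0$ is trivial, as $\IN(H)$ is empty), so almost all of the work has already been done.

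In the generic case, Lemma~\ref{lem:dominates} guarantees a pole, say~$a$, that is adjacent to every vertex of $\IN(H)$. Suppose for contradiction that $G[\IN(H)]$ contains a cycle~$C$. Then the subgraph of~$G$ consisting of~$C$ together with~$a$ and all the edges from~$a$ to~$V(C)$ is a wheel: it is $K_4$ itself if $|C|=3$, and for longer cycles one may contract a single edge of~$C$ to reduce to a shorter wheel, so it always contains a $K_4$-minor. Since~$G$ is $K_4$-minor-free, this is impossible, and $G[\IN(H)]$ must be a forest.

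In the exceptional case, $H\in\{C'(t),D'(t)\}$ with $t=\wi(H)-1$, and it suffices to read off the structure of $\IN(H)$ from the definitions of $C'$ and $D'$. All of the inner vertices coming from the $t$ parallel two-edge paths have both of their neighbors among the poles, so they are isolated in $G[\IN(H)]$; the only edge inside $\IN(H)$ is the one joining the degree-$3$ vertex of the $K_{1,3}$-gadget to its third leaf. Hence $G[\IN(H)]$ consists of isolated vertices together with a single edge, which is again a forest.

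I do not expect a genuine obstacle here: the only small thing to double-check is the wheel-to-$K_4$-minor reduction (standard, but worth spelling out because the assumption on~$G$ is minor-freeness rather than subgraph-freeness), and the bookkeeping in the exceptional case.
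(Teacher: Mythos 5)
Your proof is correct and takes essentially the same route as the paper: invoke Lemma~\ref{lem:dominates}, dispose of the exceptional graphs $C'(t)$ and $D'(t)$ by inspecting their structure, and in the dominated case observe that a cycle in $\IN(H)$ together with the dominating pole~$a$ yields a wheel and hence a $K_4$-minor (the paper states this as: $\IN(H)\cup\{a\}$ would induce a subgraph containing a subdivision of~$K_4$, which is the same observation). One trivial bookkeeping slip: $D'(t)$ with $t=\wi(H)-1$ is built from $t-1$ two-edge paths (not~$t$), but your description of $G[\IN(H)]$ as a single edge plus isolated vertices is correct regardless.
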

\begin{proof}
    The statement is clear if $H\in\{C'(t),D'(t)\}$ for some integer~$t$, so by
    Lemma~\ref{lem:dominates} we can assume that $\IN(H)$ is dominated by
    a pole~$a$ of~$H$.  Then $\IN(H)$ induces an acyclic graph, as otherwise
    $\IN(H)\cup\{a\}$ would induce a subgraph of~$G$ containing a subdivision
    of~$K_4$.
\end{proof}
\begin{lemma}
  Let~$H$ be a graph with poles~$a$ and~$b$ represented by a construction subtree of~$T$
  and assume that $\wi(H)=k-1$. Then $d_H(a)+d_H(b)\leq 2k-4$.
  \label{lem:k-1:contraction}
\end{lemma}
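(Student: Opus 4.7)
The plan is to argue by contradiction. Suppose $d_H(a)+d_H(b)\ge 2k-3$, and let $F$ be obtained from $G$ by identifying $V(H)$ with a single vertex $c$, removing the resulting loops and parallel edges. Since $H$ is a connected two-terminal series-parallel graph and $\IN(H)$ has no neighbours outside $V(H)$, the graph $F$ is a $K_4$-minor-free minor of $G$ with $|V(F)|=|V(G)|-k<|V(G)|$ and
\[
  d_F(c)\le (d_G(a)-d_H(a))+(d_G(b)-d_H(b))\le 2\Delta-(2k-3)\le 2k-3,
\]
so $\Delta(F)\le 2k-3$. Minimality of $(G,k)$ then yields an equitable $k$-coloring $\alpha$ of $F$; set $j\coloneqq\alpha(c)$. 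Since $|V(H)|=k+1$, any proper coloring of $V(H)$ using colour~$j$ twice and each other colour exactly once extends $\alpha$ to an equitable $k$-coloring of $G$, contradicting our assumption.

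If $ab\notin E(H)$, I would colour both poles with $j$ and bijectively distribute $\{1,\dots,k\}\setminus\{j\}$ among the $k-1$ inner vertices: the neighbours of $a$ and $b$ outside $V(H)$ avoid $j$ by properness of $\alpha$, and the inner vertices carry pairwise distinct colours, all different from $j$.

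If $ab\in E(H)$, I invoke Lemma~\ref{lem:dominates} (applicable since $\wi(H)=k-1\in\{1,\dots,k\}$): either $H\in\{C'(k-2),D'(k-2)\}$, or $\IN(H)$ is dominated by a pole. The possibility $H=D'(k-2)$ is excluded because then $d_H(a)+d_H(b)=2k-4$, contradicting the assumption. If $H=C'(k-2)$, the pendant inner vertex $\ell$ of its $K_{1,3}$-part is not adjacent to~$b$, so I would colour both $a$ and $\ell$ with $j$, pick a colour for $b$ different from $j$ and from its at most $d_G(b)-d_H(b)\le k-2$ colored neighbours outside $V(H)$, and bijectively colour the remaining $k-2$ pairwise non-adjacent inner vertices. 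Otherwise $\IN(H)$ is dominated, say by~$a$, whence $d_H(a)=k$; Lemma~\ref{lem:independent} makes $N_H(a)\cap N_H(b)$ independent, and it cannot equal $\IN(H)$ (else $H=C(k-1)$, forbidden by Corollary~\ref{lem:treeC:2}), so some inner vertex $v_0$ is non-adjacent to~$b$. I would then colour both $v_0$ and $b$ with $j$, pick an admissible colour for $a$ (at most $1+(d_G(a)-k)\le k-2$ colours are forbidden), and bijectively colour the remaining inner vertices.

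The main obstacle is the case $ab\in E(H)$, where directly assigning colour~$j$ to both poles is ruled out; the structural dichotomy from Lemma~\ref{lem:dominates} together with the independence from Lemma~\ref{lem:independent} supplies just enough room to place the second copy of $j$ on a carefully chosen inner vertex, properly extending $\alpha$ and contradicting the minimality of $(G,k)$.
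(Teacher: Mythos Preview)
Your approach is the same as the paper's: contract $H$ to a single vertex~$c$, pull back an equitable $k$-coloring~$\alpha$ of the smaller graph, and extend it to~$V(H)$ so that the colour~$\alpha(c)$ appears twice and every other colour once. The case analysis you run in the second case (via Lemma~\ref{lem:dominates}, Lemma~\ref{lem:independent}, and the exclusion of $C(k-1)$) mirrors the paper's, only organised slightly differently.

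There is, however, a genuine gap in your case split: you branch on whether $ab\in E(H)$, but what matters for properness is whether $ab\in E(G)$. These need not coincide. A construction subtree rooted at a P-node~$r$ is only required to contain \emph{at least two} of the subtrees below~$r$, so the leaf encoding the edge~$ab$ may lie outside the construction subtree representing~$H$; likewise, if the root of the subtree is not the root of~$T$, the edge~$ab$ may be contributed by a sibling subtree. In the situation $ab\in E(G)\setminus E(H)$ your first case assigns $\alpha(a)=\alpha(b)=j$, which is improper in~$G$. The fix is simply to case on $ab\in E(G)$ (as the paper does). Your second-case argument then carries over essentially unchanged: the exclusion of $D'(k-2)$ via $d_H(a)+d_H(b)=2k-4$ still holds; in the $C'(k-2)$ subcase nothing changes; and in the ``dominated'' subcase one should count $a$'s neighbours in~$V(H)$ within~$G$ (there are $k$ of them: the $k-1$ inner vertices together with~$b$), which yields the same bound of at most $k-2$ forbidden colours for~$a$ that you already use.
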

\begin{proof}
  Assume on the contrary that $d_H(a)+d_H(b) \geq 2k-3$.
  Let~$F$ be the graph obtained from~$G$ by contracting~$H$ into one vertex~$c$, again removing parallel edges
  and loops when they occur.
  In other words, we set~$V(F)\coloneqq(V(G)\setminus V(H))\cup\{c\}$ and
  $N_F(v)\coloneqq N_G(v)$ for~$v \in V(G)\setminus V(H)$ while $N_F(c)\coloneqq
  (N_{G}(a)\cup N_{G}(b))\cap V(F)$.  By our assumption, $d_G(c) \leq
  d_G(a)-d_H(a)+d_G(b)-d_H(b)\leq 2\Delta -(2k-3) \leq \Delta$. Consequently, $F$ is
  a $K_4$-minor-free graph with maximum degree at most~$\Delta$. By the minimality of~$G$
  there is an equitable $k$-coloring~$\alpha$ of~$F$.
  To obtain an equitable colouring of~$G$, it suffices to extend~$\alpha$ to~$V(G)$
  in such a way that the multisets~$\sst{\alpha(v)}{v\in V(H)}$
  and~$\{\alpha(c),1,\dotsc,k\}$ are equal. We note that Corollary~\ref{cor:treetree}
  yields that $\IN(H)$ induces an acyclic graph. We distinguish three cases.
  \begin{itemize}
  \item If $ab\notin E(G)$ then we define $\alpha(a)\coloneqq\alpha(b)\coloneqq\alpha(c)$ and we
        arbitrarily distribute all the colors in~$\{1,\dotsc,k\}\setminus\{\alpha(c)\}$
        to the vertices in~$\IN(H)$.

  \item If $ab\in E(G)$ and $a$ has a non-neighbor~$a' \in \IN(H)$, then by
        Lemma~\ref{lem:dominates}, it follows that either~$b$ dominates~$\IN(H)$
        or $H=C'(k-2)$. In both cases, we know that $b$ has at least~$k-2$
        neighbors in~$\IN(H)$. It follows that $b$ has at most~$\Delta-(k-2)\leq
        k-1$ neighbors outside of~$\IN(H)$, including~$a$. We define
        $\alpha(a)\coloneqq\alpha(a')\coloneqq\alpha(c)$. By the preceding remark it is possible
        to properly color~$b$ with a color~$\alpha(b)$ (so in particular
        $\alpha(a)\neq\alpha(b)$). To finish the coloring, we assign
        arbitrarily all the colors in~$\{1,\dotsc,k\}\setminus\{\alpha(a),\alpha(b)\}$ to the
        vertices in~$\IN(H)\setminus\{a'\}$.
      \item If both~$a$ and~$b$ dominate $\IN(H)$,
        then by Lemma~\ref{lem:independent} we know that $H=C(k-1)$, which does not occur by
        Lemma~\ref{lem:treeC}.
  \end{itemize}
\end{proof}
\begin{lemma}
  If~$H$ is a graph represented by a construction subtree of~$G$, then $\wi(H) \neq k-1$.
  \label{lem:tree:k-1}
\end{lemma}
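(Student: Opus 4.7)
The plan is to derive a contradiction from the existence of such an~$H$. Let $a$ and $b$ be its poles. The first step invokes Lemma~\ref{lem:dominates}, which leaves three possibilities: $H=C'(k-2)$, $H=D'(k-2)$, or $\IN(H)$ is dominated by a pole, which we may assume to be~$a$ after relabelling. The first case is immediate: one has $d_H(a)=d_H(b)=k-1$, so $d_H(a)+d_H(b)=2k-2>2k-4$, directly contradicting Lemma~\ref{lem:k-1:contraction}. In the third case, Lemma~\ref{lem:k-1:contraction} additionally forces $d_H(b)\le k-3$, a fact we shall exploit below.

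For the two remaining cases, the strategy is to construct an equitable $k$-coloring of~$G$ by reducing to a smaller graph. We would set $F\coloneqq G-(V(H)\setminus\{b\})$. Then $F$ is $K_4$-minor-free, satisfies $\Delta(F)\le\Delta$, and has $|V(F)|=|V(G)|-k<|V(G)|$, so by the minimality of $(G,k)$ it admits an equitable $k$-coloring~$\beta$. Since $|V(H)\setminus\{b\}|=k$, it would suffice to extend $\beta$ to $G$ using each of the $k$ colors \emph{exactly once} on $V(H)\setminus\{b\}$: such an extension automatically preserves equitability, because every color then gains precisely one usage when passing from~$F$ to~$G$.

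The two places where this bijective extension could fail properness are the color $c_a\coloneqq\alpha(a)$ and the inner vertex assigned the color $\beta(b)$. For~$c_a$, we need $c_a\ne\beta(w)$ for every $w\in N_G(a)\cap V(F)$; note that this condition automatically includes the constraint $c_a\ne\beta(b)$ when $ab\in E(G)$, because then $b\in N_G(a)\cap V(F)$. Since inner vertices of~$H$ have no neighbors outside $V(H)$, one computes $|N_G(a)\cap V(F)|=d_G(a)-|N_H(a)\cap\IN(H)|$, which is at most $\Delta-(k-2)\le k-1$ when $H=D'(k-2)$ and at most $\Delta-(k-1)\le k-2$ in the domination case. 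So at least one admissible color~$c_a$ always remains. For $\beta(b)$, provided it differs from $c_a$, it has to land on some vertex of $\IN(H)\setminus N_H(b)$: when $a$ dominates, $|N_H(b)\cap\IN(H)|\le d_H(b)\le k-3$ gives at least two such candidates, and when $H=D'(k-2)$ the ``extra leaf'' (the unique inner vertex of~$D'(k-2)$ having no pole as neighbor) is the single non-neighbor of~$b$ in $\IN(H)$. The remaining $k-2$ colors would then be distributed bijectively among the other inner vertices; properness inside $\IN(H)$ holds for free since the inner colors are pairwise distinct.

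The main obstacle lies in the case $H=D'(k-2)$, where Lemma~\ref{lem:k-1:contraction} is saturated ($d_H(a)+d_H(b)=2k-4$) and both degrees of freedom are nearly pinned: the forbidden-set bound for~$c_a$ degrades to~$k-1$, possibly leaving only a single valid choice, and the extra leaf is the only possible host for $\beta(b)$ whenever $\beta(b)\ne c_a$. The technical heart of the proof will be to verify that these two near-saturated constraints can be met simultaneously, using that if $ab\in E(G)$ then $\beta(b)$ already appears in the forbidden set for~$c_a$ (so $c_a\ne\beta(b)$ is automatic and the placement on the extra leaf is consistent), whereas if $ab\notin E(G)$ the choice $c_a=\beta(b)$ remains available and avoids the issue entirely.
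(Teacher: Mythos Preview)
Your approach is the same as the paper's: delete $\IN(H)\cup\{a\}$, color the smaller graph by minimality, and extend by using each of the $k$ colors exactly once on the removed set, choosing $\alpha(a)$ to avoid its outside neighbors and parking $\beta(b)$ on a non-neighbor of~$b$ in $\IN(H)$ when needed. The paper simply runs this argument uniformly (noting that in every case a pole has at least $k-2$ inner neighbors, so Lemma~\ref{lem:k-1:contraction} gives $d_H(b)\le k-2$ and hence a non-neighbor~$b'$), whereas you split off $C'(k-2)$ and $D'(k-2)$ explicitly; either organization is fine.

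One small slip in your final paragraph: when $ab\notin E(G)$ it is \emph{not} generally true that ``the choice $c_a=\beta(b)$ remains available'', since some other neighbor of~$a$ in~$F$ could already carry the color~$\beta(b)$. Fortunately this does not matter: pick any admissible $c_a$ (one exists by your count); if $c_a=\beta(b)$ you are done, and if $c_a\ne\beta(b)$ you place $\beta(b)$ on the extra leaf exactly as in your $ab\in E(G)$ branch. So the ``obstacle'' you flag is not actually an obstacle, and the case split on whether $ab\in E(G)$ is unnecessary.
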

\begin{proof}
  Assume otherwise that there is such a graph~$H$ with width~$k-1$.
  By Lemma~\ref{lem:dominates}, we may assume that a pole~$a$ of~$H$ has at
  least~$k-2$ neighbors in~$\IN(H)$. Let~$b$ be the other pole of~$H$. By
  Lemma~\ref{lem:k-1:contraction}, we have $d_H(b) \leq 2k-4-d_H(a) \leq k-2$. It follows
  that~$b$ has a non-neighbor~$b'$ in~$\IN(H)$.  By the minimality of~$G$, the graph
  $F\coloneqq G-(\IN(H)\cup\{a\})$ has an equitable $k$-coloring~$\alpha$.
  To finish the proof, it suffices to extend~$\alpha$ to~$V(G)$ in such a way that
  $\sst{\alpha(v)}{v\in\IN(H)\cup\{a\}}$ equals~$\{1,\dotsc,k\}$.  Since $a$ has
  at most~$k-1$ colored neighbors, it is possible to properly color~$a$ with
  a color~$\alpha(a)$. We set~$\alpha(b')\coloneqq\alpha(b)$ unless
  $\alpha(a)=\alpha(b)$.  Then we arbitrarily color the ($k-1$ or $k-2$)
  non-colored vertices using all the ($k-1$ or~$k-2$) colors
  in~$\{1,\dotsc,k\}\setminus\{\alpha(a),\alpha(b)\}$.
\end{proof}
\begin{corollary}
  If~$H$ is a graph represented by a construction subtree of~$G$,
  then $H\notin\{C'(k-1),D'(k-1)\}$.
  \label{lem:treeC'}
\end{corollary}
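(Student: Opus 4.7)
The plan is to derive a contradiction with Lemma~\ref{lem:tree:k-1} by showing that any construction subtree representing $H\in\{C'(k-1),D'(k-1)\}$ contains a smaller construction subtree whose represented graph has width exactly $k-1$.

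First I would analyze the top of the hypothetical construction subtree. In both $D'(k-1)$ and $C'(k-1)$, the graph $H-\{a,b\}$ has exactly $k-1$ components: a single non-trivial component $\{c,d\}$ coming from the $D'(1)$-part (with $c$ adjacent to both poles and $d$ a pendant neighbour of $c$), together with $k-2$ singleton components. Since $T$ is in normal form, the root $n$ of the hypothetical construction subtree must therefore be a P-node whose children are: a subtree $T_1$ representing $D'(1)=K_{1,3}$ with poles $a,b$; exactly $k-2$ subtrees each representing a $P_3$; and, in the case $H=C'(k-1)$, an additional leaf for the edge $ab$.

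When $k\geq 4$, I would form the subtree consisting of $n$ together with $T_1$ and any $k-3$ of the $P_3$-subtrees. This is a valid construction subtree of $T$, since the P-node $n$ then has $k-2\geq 2$ children-subtrees below it and there is no consecutivity constraint to check; and by definition the graph it represents is the parallel join of $D'(1)$ with $k-3$ copies of $P_3$, namely $D'(k-2)$, whose width equals $(k-2)+1=k-1$. This directly contradicts Lemma~\ref{lem:tree:k-1}.

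The main subtle point is that $k=3$ requires a separate argument, because then one cannot discard any $P_3$-subtree and still have at least two children-subtrees below $n$. In that case I would observe instead that $T_1$ itself already qualifies as a construction subtree of $T$: the graph $D'(1)=K_{1,3}$ has four vertices while every leaf of $T$ represents a two-vertex graph, so the root of $T_1$ cannot be a leaf and hence has at least two children in $T$. The graph that $T_1$ represents is $D'(1)$, whose width is $2=k-1$, again contradicting Lemma~\ref{lem:tree:k-1}.
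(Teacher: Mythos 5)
Your proposal is correct and follows essentially the paper's own route: in both arguments, the normal form of the root~$n$ forces a P-node with one child representing~$K_{1,3}$ and $k-2$ children representing~$P_3$ (plus an edge-leaf in the $C'(k-1)$ case), and discarding one $P_3$-child yields a construction subtree representing~$D'(k-2)$, of width~$k-1$, contradicting Lemma~\ref{lem:tree:k-1}. Your separate treatment of~$k=3$ is in fact \emph{more} careful than the paper's proof, which for $k=3$ and $H=D'(2)$ would leave the root~$n$ with only one child-subtree (the one representing~$K_{1,3}$), violating the requirement that a construction subtree contain at least two subtrees below its root; your observation that the subtree representing~$K_{1,3}$ qualifies on its own as a construction subtree (its root cannot be a leaf, hence has at least two children) and represents $D'(1)$ of width $2=k-1$ correctly closes that small gap.
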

\begin{proof}
  Assume otherwise that~$H$ is such a graph, with poles~$a$ and~$b$,
  and represented by a construction subtree of~$G$ with root~$n$.
  Since~$n$ is in normal form and $H-\{a,b\}$ is disconnected,
  the node~$n$ is a parallel node with a children representing
  a star~$K_{1,3}$ and (at least)~$k-2$ children each representing a
  path~$P_3$ with end-vertices~$a$ and~$b$ (the node~$n$ may have further children).
  It follows that~$T$ has a construction subtree of~$G$ rooted on~$n$
  representing~$D'(k-2)$, which has width~$k-1$.
  This contradicts Lemma~\ref{lem:tree:k-1}.
\end{proof}
\begin{lemma}
  If~$H$ is a graph represented by a construction subtree of~$G$, then $\wi(H) \neq k$.
  \label{lem:tree:k}
\end{lemma}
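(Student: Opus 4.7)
Assume for contradiction that a construction subtree represents a graph $H$ of width $k$ with poles $a$ and $b$. Combining Lemma~\ref{lem:dominates} with Corollary~\ref{lem:treeC'} gives that some pole, say $a$, dominates $\IN(H)$, so $a$ has at most $\Delta-k\leq k-3$ neighbours outside $V(H)$. By Lemma~\ref{lem:independent} and Corollary~\ref{lem:treeC:2}, $b$ does not also dominate $\IN(H)$ (else $H$ is $C(k)$ or $D(k)$, both forbidden), so $\IN(H)\setminus N_H(b)$ contains some vertex $b'$. By Corollary~\ref{cor:treetree}, $\IN(H)$ induces a forest on $k\geq 3$ vertices, and hence contains a non-adjacent pair.

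Setting $F\coloneqq G-(\IN(H)\cup\{a\})$, minimality gives an equitable $k$-colouring $\alpha$ of $F$. Since $|V(G)|-|V(F)|=k+1$, equitability of any extension forces the multiset of colours on $\IN(H)\cup\{a\}$ to consist of each colour used once plus one distinguished colour $c^*$ used a second time; a short counting argument on the class sizes of $\alpha$ further requires $c^*$ to lie in a smallest class of $\alpha$. Because $a$ is adjacent to every inner vertex, the colour of $a$ can be used only once (on $a$) and must differ from $c^*$, while $c^*$ is used twice on $\IN(H)$. I would set $\alpha(a)$ to a colour avoiding the at most $k-3$ colours of $a$'s coloured neighbours in $F$ and different from $c^*$ (leaving at least two valid choices), place $c^*$ on a non-adjacent pair in $\IN(H)$, bijectively assign the remaining colours to the other $k-2$ inner vertices, and put $\alpha(b)$ on a vertex of $\IN(H)\setminus N_H(b)$ (which contains $b'$) in order to preserve properness at $b$.

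The main obstacle is the corner case where $\alpha$ has a unique smallest class and $\alpha(b)$ lies in it, so $c^*=\alpha(b)$ is forced: then both $c^*$-vertices must lie in $\IN(H)\setminus N_H(b)$, which may contain only $b'$. I expect this to be handled either by selecting an equitable colouring of $F$ in which the class of $b$ is not the smallest (for instance, by a local swap in $F$), or by exploiting the structural fact that every inner vertex of an SP-graph has degree at least two, so $b'$ has a neighbour in $\IN(H)$ and the possible configurations of a forest $\IN(H)$ with $|\IN(H)\setminus N_H(b)|=1$ are very restricted, allowing one to still extract a valid non-adjacent pair (after possibly adjusting which vertex plays the role of $b'$). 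The resulting proper equitable $k$-colouring of $G$ then contradicts the minimality of $(G,k)$.
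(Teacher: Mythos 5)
Your opening (Lemma~\ref{lem:dominates} plus Corollaries~\ref{lem:treeC:2} and~\ref{lem:treeC'} give a pole $a$ dominating $\IN(H)$, and Lemma~\ref{lem:independent} gives a non-neighbour $b'$ of $b$) matches the paper, but your choice $F\coloneqq G-(\IN(H)\cup\{a\})$ creates the very obstruction you flag, and neither of your proposed fixes closes it. Since $\abs{\IN(H)\cup\{a\}}=k+1$, some colour $c^*$ is used twice, necessarily on two inner vertices (every inner vertex is adjacent to $a$), and, as you note, when $\alpha$ has a unique smallest class $c^*$ is forced to be its colour. If that colour is $\alpha(b)$ and $b$ is adjacent to all of $\IN(H)$ except $b'$, the extension is simply impossible: both copies of $c^*=\alpha(b)$ would have to go on inner non-neighbours of $b$, and there is only one --- no re-designation of ``which vertex plays the role of $b'$'' manufactures a second one. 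Your first remedy (choose an equitable colouring of $F$ whose smallest class is not $b$'s, ``by a local swap'') is unsupported: minimality of $(G,k)$ only yields \emph{some} equitable $k$-colouring of $F$, and nothing guarantees a swap that preserves both properness and equitability. Your second remedy rests on a false premise: in this paper the SP-tree has possibly edgeless leaves, so represented subgraphs do have inner vertices of degree~$1$ (e.g.\ $D'(t)$, which is exactly why it appears as an exception in Lemma~\ref{lem:dominates}); and in any case constraining the shape of the forest $\IN(H)$ cannot produce a second non-neighbour of $b$. One could try to show the bad configuration ($b$ dominating all but one inner vertex of a width-$k$ subgraph) contradicts Lemma~\ref{lem:treeC} or Lemma~\ref{lem:tree:k-1} via suitable sub-subtrees, but that structural analysis is absent from your sketch, so the proof as written has a genuine gap.

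The paper dissolves the corner case by a different choice of $F$: it keeps $a$, setting $F\coloneqq G-\IN(H)$ with the edge $ab$ added if absent, and extends an equitable $k$-colouring $\alpha$ of $F$ so that $\IN(H)\cup\{a\}$ receives the multiset $\{1,\dotsc,k\}\cup\{\alpha(a)\}$. Since $a$'s old colour is removed and this multiset added, every class grows by exactly one, so no ``smallest class'' condition ever arises: the doubled colour is always $\alpha(a)$, a colour made harmless by recolouring $a$ itself. Concretely, for $k\ge4$ the vertex $a$ has at most $\Delta-k\le k-3$ coloured neighbours, so it can be recoloured with $\alpha'(a)\notin\{\alpha(a),\alpha(b)\}$; then $\alpha(b)$ goes on $b'$ (only \emph{one} non-neighbour of $b$ is ever needed) and the two copies of $\alpha(a)$ go on an independent pair of $\IN(H)\setminus\{b'\}$, which exists because $\IN(H)$ is a forest on $k-1\ge3$ vertices outside $b'$ by Corollary~\ref{cor:treetree}. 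The case $k=3$, where such a pair inside $\IN(H)\setminus\{b'\}$ need not exist, is treated separately: there $k\le\Delta\le2k-3$ forces $\Delta=k=3$ and $ab\notin E$, so one may set $\alpha'(a)\coloneqq\alpha(b)$ and place $\alpha(a)$ on a non-adjacent pair of $\IN(H)$. Your proposal also misses the need for this $k=3$ case. In short: right opening moves, but the reduction graph is chosen so that the difficulty you correctly identified never has to be confronted --- adopting the paper's $F$ is the fix.
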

\begin{proof}
  Suppose, on the contrary, that $H$ is such a graph with width~$k$.
  Let~$a$ and~$b$ be the poles of~$H$.
  By Lemmas~\ref{lem:treeC:2} and~\ref{lem:treeC'}, we know that
  $H\notin\{C(k),C'(k-1),D(k),D'(k-1)\}$. It now follows from
  Lemma~\ref{lem:dominates}, that $a$ dominates~$\IN(H)$. Then
  $b$ has a non-neighbor~$b'\in\IN(H)$, for otherwise
  $b$ also would dominate~$\IN(H)$, so Lemma~\ref{lem:independent} would imply
  that $H \in\{C(k),D(k)\}$.
  
  Let~$F$ be the graph~$G-\IN(H)$ to which we add the edge~$ab$ if it is
  not already present. By the minimality of~$F$ there is an equitable
  $k$-coloring~$\alpha$ of~$F$. To finish the proof, it suffices to
  deduce a proper coloring~$\alpha'$ of~$G$ that equals~$\alpha$
  on~$V(G)\setminus(\IN(H)\cup\{a\})$ and such that the
  multisets~$\sst{\alpha'(u)}{u\in\IN(H)\cup\{a\}}$
  and~$\{1,\dotsc,k\}\cup\{\alpha(a)\}$ are equal.  We distinguish two cases
  depending on the value of~$k$.
  \begin{itemize}
  \item Case~1: $k\geq 4$.  Since $a$ has~$k$ neighbors in~$\IN(H)$, the
        vertex~$a$ has at most $\Delta - k \leq k - 3$ colored neighbors, so we
        can properly recolor~$a$ with a color~$\alpha'(a)$ different from both~$\alpha(a)$
        and~$\alpha(b)$. By Corollary~\ref{cor:treetree}, $\IN(H)$ is a forest and we
        know that $\abs{\IN(H)\setminus\{b'\}}=k-1\geq 3$, so there is an independent
        set $A\subset\IN(H)\setminus\{b'\}$ of size~$2$.  To complete the
        coloring, we assign $\alpha(b)$ to~$b'$ and~$\alpha(a)$ to the
        vertices in~$A$ and we distribute arbitrarily the colors
        in~$\{1,\dotsc,k\}\setminus\{\alpha'(a),\alpha(a),\alpha(b)\}$ to the non-colored
        vertices.
  \item Case~2: $k=3$.  Since~$a$ dominates a set of size~$k$, it holds that
        $k \leq \Delta \leq 2k-3$, so $k=3=\Delta$. Moreover, it also follows
        that $ab \notin E$.  As a consequence of Corollary~\ref{cor:treetree}, the
        set~$\IN(H)$ contains two non-adjacent vertices~$v_1$ and~$v_2$. Let~$u$
        be the third vertex in~$\IN(H)$, so $\IN(H)=\{v_1,v_2,u\}$. We
        define~$\alpha'(a)\coloneqq\alpha(b)$, we
        set~$\alpha'(v_i)\coloneqq\alpha(a)$ for~$i\in\{1,2\}$ and we attribute
        to~$u$ the third color, that is the one
        in~$\{1,2,3\}\setminus\{\alpha(a),\alpha(b)\}$. 
  \end{itemize}
  In both cases, we obtain an equitable $k$-coloring of~$G$, a contradiction. 
\end{proof}
Our last two lemmas rely on the following observation.
\begin{observation}
      Let~$m$ be a positive integer and let~$\lambda_1,\dotsc,\lambda_m\in\{1,2\}$.
      If $A_1$ and~$A_2$ are two subsets of the vertices of a graph~$G$ that has no edge
      between~$A_1$ and~$A_2$, then the vertices in~$A_1\cup A_2$ can be properly
      colored using the colors $1,\dotsc,m$ with respective multiplicities
      $\lambda_1,\dotsc,\lambda_m$ whenever
      $\sum_{j=1}^m\lambda_j=\abs{A_1}+\abs{A_2}$ and $\abs{A_i}\leq m$ for~$i\in\{1,2\}$.
  \label{obs:2parts}
\end{observation}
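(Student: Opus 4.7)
The plan is to reduce the statement to a straightforward counting exercise. Let $s$ denote the number of indices $j$ with $\lambda_j=2$; equivalently, the number of ``doubled'' colors. Since every $\lambda_j\in\{1,2\}$, the hypothesis $\sum_{j=1}^m\lambda_j=\abs{A_1}+\abs{A_2}$ rewrites as $s=\abs{A_1}+\abs{A_2}-m$. From $\lambda_j\ge1$ one gets $\sum_j\lambda_j\ge m$, so $s\ge0$; and the hypothesis $\abs{A_i}\le m$ rearranges to $s\le\abs{A_{3-i}}$, giving $s\le\min(\abs{A_1},\abs{A_2})$. These two bounds are the only place where the assumption $\abs{A_i}\le m$ is used.

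The main step is the explicit assignment of colors. I would relabel so that $\lambda_1=\dotsb=\lambda_s=2$ and $\lambda_{s+1}=\dotsb=\lambda_m=1$. Using the inequality $s\le\min(\abs{A_1},\abs{A_2})$, pick arbitrary vertices $u_1,\dotsc,u_s$ in~$A_1$ and $v_1,\dotsc,v_s$ in~$A_2$, and color both $u_j$ and $v_j$ with~$j$ for each $j\le s$. The number of still uncolored vertices is $(\abs{A_1}-s)+(\abs{A_2}-s)=(m-\abs{A_2})+(m-\abs{A_1})=m-s$, which matches exactly the number of remaining colors $s+1,\dotsc,m$, each of multiplicity~$1$. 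These can therefore be distributed bijectively to the remaining vertices in any fashion.

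Finally I would verify properness. Each doubled color is carried by one vertex of~$A_1$ and one of~$A_2$; since $G$ has no edge between $A_1$ and~$A_2$, these two vertices are non-adjacent. Within~$A_i$, the colors used are the~$s$ doubled ones together with $\abs{A_i}-s$ of the single ones, and these are pairwise distinct by construction; hence no two vertices of the same~$A_i$ share a color, regardless of what internal edges $G[A_i]$ may contain. Consequently the produced coloring is proper. There is no real obstacle in this argument: once the bounds on~$s$ are observed the rest is bookkeeping, which is presumably why the statement is recorded as an observation rather than a lemma.
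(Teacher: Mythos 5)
Your proposal is correct and follows essentially the same route as the paper: your parameter $s$ is the paper's $m_2$, your bounds $0\le s\le\min(\abs{A_1},\abs{A_2})$ are exactly the counting step the paper derives from $\abs{A_1}+\abs{A_2}=m+m_2$ and $\abs{A_i}\le m$, and your pairing of each doubled color with one vertex of $A_1$ and one of $A_2$ followed by a bijective distribution of the single colors is the paper's greedy procedure. Your explicit verification of properness inside each $A_i$ (all colors distinct there, so internal edges are harmless) is a point the paper leaves implicit, but it is the same argument.
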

\begin{proof}
      For~$s \in \{1,2\}$, set~$m_s\coloneqq\sst{i\in\{1,\dotsc,m\}}{\lambda_i=s}$.
      We know that $\abs{A_1}+\abs{A_2}=m_1+2m_2=m+m_2$. We deduce that $A_1\leq m_2$ and $A_2\leq m_2$.
      This ensures that the following greedy procedure is valid.
      For every color~$i$ with $\lambda_i=2$, we color one vertex in~$A_1$ and
      one vertex in~$A_2$ with~$i$. After that, it remains to assign
      arbitrarily the $m_1$ colors of multiplicity~$1$ to the~$m_1$ non-colored
      vertices. 
\end{proof}
\begin{lemma}
  Let~$H\coloneqq P(H_1,H_2)$ be a graph represented by a construction subtree of~$T$. Assume
  that $\wi(H_i)\leq k-2$ for~$i \in \{1,2\}$.  Then $\wi(H) \leq k-2$.
  \label{lem:parallel} 
\end{lemma}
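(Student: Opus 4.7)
The plan is to argue by contradiction. Suppose $\wi(H) \geq k-1$; then Lemma~\ref{lem:tree:k-1} and Lemma~\ref{lem:tree:k} rule out $\wi(H) \in \{k-1, k\}$, so in fact $\wi(H) \geq k+1$. Combined with $\wi(H) = \wi(H_1) + \wi(H_2) \leq 2(k-2)$, this forces $k \geq 5$ and each $\wi(H_i) \geq 3$. Set $r \coloneqq \wi(H)$ and $s \coloneqq r - k$, so $1 \leq s \leq k-4$.

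To reach a contradiction I would construct an equitable $k$-coloring of $G$. Let $F$ be $G - \IN(H)$ with the edge $ab$ added if not already present; by minimality of $G$, the graph $F$ admits an equitable $k$-coloring $\alpha$. I would extend $\alpha$ to $\IN(H)$ by using each of the $k$ colors either once or twice, exactly $s$ of them being used twice. The ordering convention lets me designate the doubled colors to be those of minimum count in $\alpha$, so that the resulting coloring of $G$ is equitable.

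The central tool is Observation~\ref{obs:2parts}, applied with $A_1 \coloneqq \IN(H_1)$, $A_2 \coloneqq \IN(H_2)$, and $m \coloneqq k$. Since $H = P(H_1,H_2)$, there are no edges between $A_1$ and $A_2$ in $G$, and $|A_i| = \wi(H_i) \leq k-2 \leq k$. Hence, for any prescribed multiplicities $\lambda_1, \dotsc, \lambda_k \in \{1, 2\}$ with $\sum_j \lambda_j = r$, the observation produces a proper coloring of $\IN(H)$ with colors $\{1, \dotsc, k\}$ having those multiplicities.

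The main obstacle I anticipate is coordinating this coloring with the already-fixed values $\alpha(a)$ and $\alpha(b)$: a vertex of $\IN(H)$ adjacent to $a$ cannot receive color $\alpha(a)$, and similarly for $b$. To handle this, I would apply Lemma~\ref{lem:dominates} to each $H_i$ (valid since $\wi(H_i) \leq k-2 \leq k$): either a pole of $H_i$ dominates $\IN(H_i)$, or $H_i \in \{C'(t_i), D'(t_i)\}$. In each scenario one can locate non-neighbors of $a$ and $b$ within $\IN(H)$ to host the colors $\alpha(a)$ and $\alpha(b)$, exploiting the freedom in Observation~\ref{obs:2parts} as to which vertex of $A_1$ or $A_2$ receives each doubled color. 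In the extreme case where $a$ (say) dominates all of $\IN(H)$, the inequality $r \geq k+1$ forces $a$ to have at most $\Delta - r \leq 2k-3 - (k+1) = k-4$ neighbors outside $\IN(H)$, leaving enough available colors to recolor $a$ itself, thereby freeing up $\alpha(a)$ for use inside $\IN(H)$. Completing this case analysis yields an equitable $k$-coloring of $G$, contradicting the minimality of $G$.
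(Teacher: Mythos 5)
Your skeleton genuinely matches the paper's: contradiction, Lemmas~\ref{lem:tree:k-1} and~\ref{lem:tree:k} to get $\wi(H)\ge k+1$, Observation~\ref{obs:2parts} applied across $\IN(H_1)$ and $\IN(H_2)$, and a case split on which pole dominates what. But the step you defer --- ``in each scenario one can locate non-neighbors of $a$ and $b$ within $\IN(H)$ to host the colors $\alpha(a)$ and $\alpha(b)$'' --- is exactly where the whole proof lives, and with your single auxiliary graph $F=G-\IN(H)$ plus the edge $ab$ it can fail. Equitability forces your $s$ doubled colors to be (among) the least-used colors of $\alpha$, and nothing prevents $\alpha(a)$ or $\alpha(b)$ from being forced into that set. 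A doubled color $\alpha(a)$ then needs \emph{two} pairwise non-adjacent non-neighbors of $a$ inside $\IN(H)$, placed compatibly with the counting in Observation~\ref{obs:2parts}; if, say, $a$ dominates $\IN(H_1)$, both copies must sit in $\IN(H_2)$, and when $s\in\{1,2\}$ the components of $H-\{a,b\}$ can be as small as $s+2\le 4$ vertices, so such a pair need not exist. The paper escapes this exact trap by changing the auxiliary graph when $\mu\coloneqq s\le2$: it adds a crystal $C(\mu)$ on the poles $a,b$, whose pre-colored inner vertices are adjacent to \emph{both} poles, so in the target multiset the doubled colors are automatically distinct from $\alpha(a)$ and $\alpha(b)$, and a single non-neighbor $u_1$ of $b$ and $u_2$ of $a$ suffice. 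Only for $\mu\ge3$ does the paper use your plain $F$, and there it first proves that every component of $H-\{a,b\}$ has at least $\mu+2\ge5$ vertices in order to extract two independent non-neighbors of each pole.

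That component-size bound is a second missing ingredient in your write-up: it comes from taking $H$ itself minimal (not only $G$) together with the normal form of the root node, and it is also what rules out the exceptional outcomes $H_i\in\{C'(t_i),D'(t_i)\}$ of Lemma~\ref{lem:dominates}, which your sketch mentions but never disposes of. Finally, in the regime where $a$ dominates all of $\IN(H)$, recoloring $a$ while keeping it in $F$ is more delicate than your count suggests: the freed color $\alpha(a)$ then requires multiplicity up to $3$ in the multiset assigned to $\IN(H)$ (outside the scope of Observation~\ref{obs:2parts} whenever $\alpha(a)\le s$), and the new color of $a$ must appear zero times on $\IN(H)$ since $a$ dominates it, so it must be chosen of multiplicity one in the target multiset. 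The paper sidesteps both issues by instead setting $F\coloneqq G-(\IN(H)\cup\{a\})$ and coloring $a$ with one of the $k-1-\mu$ multiplicity-one colors of $\{1,\dotsc,k+\mu+1\}$ reduced modulo $k$, which its bound of $k-3-\mu$ on $a$'s colored neighbors makes feasible. Your plan is likely repairable along these lines, but as written it lacks the crystal gadget and the minimality-of-$H$ component argument that make the two hard regimes go through.
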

\begin{proof}
  We proceed by contradiction. Let~$H$ be a minimal counter-example.  By
  Lemmas~\ref{lem:tree:k-1} and~\ref{lem:tree:k}, we know that $\wi(H) =  k + \mu$
  for some positive integer~$\mu$.
  
  Let~$a$ and~$b$ be the poles of~$H$.  We first prove that every
  component~$U$ of~${H-\{a,b\}}$ has at least~$\mu+2$ vertices.
  Indeed, since the root~$n$ of the construction subtree representing~$H$
  is in normal form,
  the node~$n$ is a parallel node and the subgraph induced by~$U\cup\{a,b\}$,
  from which we
  remove the edge~$ab$ if it is present, is represented by a children of~$n$,
  so $H'\coloneqq H-U$ is represented by a construction subtree of~$T$.
  If moreover $\abs{U}\leq\mu+1$, then~$H'$ has width at least~$k-1$,
  thereby contradicting the minimality of~$H$.
  In particular, $\wi(H_i)\geq \mu+2\geq 3$ for $i\in\{1,2\}$,
  so $k\geq 5$.
  
  Assume for the time being that neither~$a$ nor~$b$ dominates~$\IN(H)$. By the
  remark above and
  Lemma~\ref{lem:dominates}, we know that each of~$\IN(H_1)$ and~$\IN(H_2)$ is
  dominated by either~$a$ or~$b$.  Consequently, we may assume that
  $a$ dominates~$\IN(H_1)$ but not~$\IN(H_2)$ and $b$ dominates~$\IN(H_2)$ but not~$\IN(H_1)$.
  Let~$u_1 \in \IN(H_1)$ and~$u_2 \in \IN(H_2)$ be non-neighbors of~$b$ and~$a$, respectively.
  We distinguish two cases depending on the value of~$\mu$.

  \textbf{First case: $\mu \leq 2$.}
  Let~$F$ be the graph~$G-\IN(H)$ to which we add a crystal~$C(\mu)$ with
  poles~$a$ and~$b$. Let~$v_1,\dotsc,v_{\mu}$ be the inner vertices of this new
  crystal. Note that $\abs{V(G)} - \abs{V(F)} = k$. Since $d_H(a) \geq \wi{(H_1)} \geq
  \mu+2$ and similarly $d_H(b) \geq \mu +2$, the graph~$F$ has maximum degree at
  most~$\Delta$.
 
  By the minimality of~$G$ there is an equitable $k$-coloring~$\alpha$ of~$F$.
  Note that the restriction of~$\alpha$ to~$V(G) \setminus \IN(H)$ is also
  a proper partial coloring of~$G$. To equitably color~$G$, it suffices to extend
  this partial coloring to a proper coloring~$\beta$ of~$G$ such that the
  multiset~$\sst{\beta(v)}{v \in \IN(H)}$ equals the
  multiset~$C\coloneqq\{1,\dotsc,k\} \cup \sst{\alpha(v_i)}{1 \leq i \leq \mu}$.

  The colors~$\alpha(a)$ and~$\alpha(b)$ both have multiplicity exactly~$1$ in~$C$
  and the maximal multiplicity in~$C$ is at most~$\mu+1$.  
  We set~$\beta(u_1)\coloneqq\alpha(b)$ and~$\beta(u_2)\coloneqq\alpha(a)$.

  If the maximal multiplicity in~$C$ is~$2$, then
  Observation~\ref{obs:2parts} ensures that we can properly assign the~$k-2$
  remaining colors since each of~$H_1$ and~$H_2$ has at most~$k-3$ non-colored
  vertices. This yields an equitable $k$-coloring of~$G$, which is
  a contradiction.
  
  If the maximal multiplicity in~$C$ is~$3$, then $\mu = 2$, so
  $\wi(H_1) \geq k+2 \geq 4$.
  It follows then that $\IN(H_1)\setminus\{u_1\}$
  contains an independent set~$\{v_1,v_2\}$ of size~$2$. Indeed, otherwise
  $\IN(H_i)\setminus\{u_i\}$ would be a clique of size at least~$3$, which with
  $a$ or~$b$ would induce a copy of~$K_4$ in~$G$.
  Let~$v_3$ be a vertex in~$\IN(H_2)\setminus\{u_2\}$.  We color~$v_1$, $v_2$
  and~$v_3$ with the (unique) color of multiplicity~$3$ in~$C$.  Again,
  observation~\ref{obs:2parts} ensures that we can properly assign the~$k-3$
  remaining colors since each of~$H_1$ and~$H_2$ has at most~$k-4$ non-colored
  vertices.
    
  \textbf{Second case: $\mu \geq 3$.}
  Let~$F$ be the graph~$G -\IN(H)$ to which we add the edge~$ab$ if it is not
  already present. By the minimality of~$G$ there is an equitable
  $k$-coloring~$\alpha$ of~$F$. To equitably color~$G$, it suffices to
  extend~$\alpha$ to a proper coloring of~$G$ such that the
  multiset~$\sst{\alpha(v)}{v \in \IN(H)}$
  equals the multiset~$C\coloneqq\{1,\dotsc,k,1,\dotsc,\mu\}$.

  As $\mu\ge3$, every component of~$H-\{a,b\}$ has at least~$\mu+2\geq 5$
  vertices. Consequently, $a$ has two non-adjacent non-neighbors~$w_2$
  and~$w_2'$ in~$\IN(H_2)$. To see this, consider a component~$U$ of~$\IN(H_2)$. By
  Lemma~\ref{lem:independent}, the set~$U$ contains only one neighbor of~$a$. It follows
  that $\abs{U\setminus N(a)}\geq 3$, which gives the announced property since by
  Corollary~\ref{cor:treetree} the set~$U$ induces a tree in~$G$. One proves similarly that~$b$
  has two non-adjacent non-neighbors~$w_1$ and~$w_1'$ in~$\IN(H_1)$.  We
  set~$\alpha(w_2)\coloneqq\alpha(a)$,  $\alpha(w_1)\coloneqq\alpha(b)$ and if
  necessary $\alpha(w_2')\coloneqq\alpha(a)$
  and/or~$\alpha(w_1')\coloneqq\alpha(b)$.  After this, each of~$H_1$ and~$H_2$
  has at most~$k-3$ non-colored vertices. By Observation~\ref{obs:2parts}, we can
  extend this coloring using the $k-2$ remaining colors in~$C$.

  From now on, we assume that $a$ dominates~$\IN(H)$.
  Set~$F\coloneqq G-(\IN(H)\cup\{a\})$.
  By the minimality of~$G$ there is an equitable
  $k$-coloring~$\alpha$ of~$F$. To equitably color~$G$, it suffices to
  extend~$\alpha$ to a proper coloring of~$G$ such that the
  multiset~$\sst{\alpha(v)}{v \in \IN(H)\cup\{a\}}$
  equals the multiset~$C\coloneqq\{1,\dots,k+\mu+1\}$,
  where integers are reduced modulo~$k$.
  Note that $k+\mu+1\leq\wi(H_1)+\wi(H_2)+1<2k$ so every color has multiplicity
  either~$1$ or~$2$ in~$C$.
  
  The vertex~$a$ has at
  most~$k-3-\mu$ colored neighbors. There are $k-1-\mu$ colors with multiplicity
  one in~$C$. Consequently, it is possible to color~$a$ with a color of
  multiplicity one that is different from~$\alpha(b)$.

  We now place the color~$\alpha(b)$. We know that~$\wi(H)\ge k+\mu\ge 6$.
  By Lemma~\ref{lem:independent}, and since each component
  of~$H\setminus\{a,b\}$ has size at least $\mu+2\ge 3$, the vertex~$b$ has at
  least one non-neighbor in each of~$H_1$ and~$H_2$. We color a number of these
  non-neighbors equal to the multiplicity of~$\alpha(b)$ in~$C$ (which is either~$1$
  or~$2$) using the color~$\alpha(b)$.
  Observation~\ref{obs:2parts} then ensures
  that we can obtain an equitable coloring with the $k-2$ remaining colors.
\end{proof}

\begin{lemma}
  Let~$H\coloneqq S(H_1,H_2)$ be a graph represented by a construction subtree of~$T$. Assume that $\wi{(H_i)}\leq k-2$ for~$i \in \{1,2\}$.
  Then $\wi{(H)} \leq k-2$.
  \label{lem:series}
\end{lemma}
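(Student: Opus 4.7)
The plan is to follow the contradiction scheme of Lemma~\ref{lem:parallel}. Suppose $\wi(H) \geq k - 1$. Lemmas~\ref{lem:tree:k-1} and~\ref{lem:tree:k} rule out $\wi(H) \in \{k-1, k\}$, so $\wi(H) = k + \mu$ for some integer $\mu \geq 1$. The identity $\wi(H) = \wi(H_1) + 1 + \wi(H_2)$ (the common pole $c$ of $H_1$ and $H_2$ becomes an inner vertex of $H$) combined with the assumption $\wi(H_i) \leq k - 2$ yields $\mu \leq k - 3$ and $\wi(H_i) \geq \mu + 1 \geq 2$; in particular $k \geq 4$.

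Define $F \coloneqq G - \IN(H)$, to which we add the edge $ab$ if it is not already present. Contracting the connected set $\IN(H) \cup \{a\}$ in $G$ onto the single vertex $a$ and then removing parallel edges and loops produces exactly $F$, so $F$ is a minor of $G$ and therefore $K_4$-minor-free. Because each of $a$ and $b$ has at least one neighbor in $\IN(H)$ (since $\wi(H_i) \geq 2$ forces $H_i$ to be non-trivial), removing $\IN(H)$ and adding the single edge $ab$ keeps the maximum degree at most $\Delta$. By minimality of $G$, the graph $F$ admits an equitable $k$-coloring $\alpha$, with $\alpha(a) \neq \alpha(b)$.

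It then suffices to extend $\alpha$ to $\IN(H)$ so that the overall coloring is proper and the added colors realize the multiset $\mathcal{C} \coloneqq \{1,\dotsc,k\} \cup \{1,\dotsc,\mu\}$, which guarantees equitability. Since no edge of $H$ joins $\IN(H_1)$ to $\IN(H_2)$ (they are separated by the cut vertex $c$), the strategy is: first, color $c$ with some $\gamma \in \{1,\dotsc,k\}$ distinct from $\alpha(a)$ if $ac \in E(G)$ and from $\alpha(b)$ if $bc \in E(G)$ (always possible since $k \geq 4$); then split $\mathcal{C} \setminus \{\gamma\}$ between $\IN(H_1)$ and $\IN(H_2)$ according to their sizes; and finally, on each side, reserve positions for the doubled colors of $\mathcal{C}$ and for $\alpha(a)$ in $\IN(H_1) \setminus N(a)$ (respectively $\alpha(b)$ in $\IN(H_2) \setminus N(b)$) using non-adjacent non-neighbors, and invoke Observation~\ref{obs:2parts} to finish.

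The main obstacle is locating enough non-adjacent non-neighbors inside each $\IN(H_i)$ when $\mu$ is small; this is the same difficulty that arose in Lemma~\ref{lem:parallel}, and the resolution adapts accordingly. For $\mu \geq 3$, the inequality $\wi(H_i) \geq \mu + 1 \geq 4$, combined with Corollary~\ref{cor:treetree} (each $\IN(H_i)$ induces a forest) and Lemma~\ref{lem:independent}, furnishes two non-adjacent non-neighbors of the relevant pole on each side. For $\mu \in \{1,2\}$, the remedy of the first case of Lemma~\ref{lem:parallel} applies: one replaces $\IN(H)$ in $F$ by a crystal $C(\mu)$ with poles $a$ and $b$ before invoking minimality, which preassigns the duplicate slots of $\alpha(a)$ and $\alpha(b)$ in the eventual equitable coloring of $G$. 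A brief sub-analysis via Lemma~\ref{lem:dominates}, separating the cases in which $\IN(H_i)$ is dominated by a pole of $H_i$ from the exceptional cases $H_i \in \{C'(t_i), D'(t_i)\}$, completes the argument without introducing new ideas.
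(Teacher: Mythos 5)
Your opening reductions are sound: $\wi(H)=\wi(H_1)+\wi(H_2)+1$, so Lemmas~\ref{lem:tree:k-1} and~\ref{lem:tree:k} give $\wi(H)=k+\mu$ with $1\le\mu\le k-3$ and $\wi(H_i)\ge\mu+1$, and the auxiliary graph $F=G-\IN(H)$ plus the edge $ab$ is indeed the right minor to take. The proof breaks at the color-placement stage. You reserve $\alpha(a)$ on vertices of $\IN(H_1)\setminus N(a)$ and $\alpha(b)$ on $\IN(H_2)\setminus N(b)$, and for $\mu\ge3$ you claim that Corollary~\ref{cor:treetree} and Lemma~\ref{lem:independent} furnish two non-adjacent non-neighbors of the relevant pole on each side. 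Neither result does this: Lemma~\ref{lem:independent} constrains only \emph{common} neighbors of the two poles, and Lemma~\ref{lem:dominates} says that a pole dominating its own side is in fact the \emph{generic} situation --- take $H_1$ to be a fan in which $a$ is adjacent to every inner vertex; then $\IN(H_1)\setminus N(a)=\emptyset$ no matter how large $\mu$ is. The sound move, and the one the paper makes in its first case, is cross-placement: every vertex of $\IN(H_2)$ is automatically a non-neighbor of $a$, and every vertex of $\IN(H_1)$ a non-neighbor of $b$, so all one needs is an \emph{independent pair} on the receiving side when the color has multiplicity two in $\mathcal{C}$. By Corollary~\ref{cor:treetree} such a pair exists unless $\IN(H_i)$ induces a clique, which (forest plus clique) forces $\wi(H_i)=2$, possible only when $\mu=1$. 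That clique case is precisely what the paper's second case is built for, with a different auxiliary graph --- $F=G-(\IN(H_1)\cup\IN(H_2))$, \emph{keeping} the cut vertex, adding the triangle on $\{a,b,c\}$, and recoloring the cut vertex --- and nothing in your sketch covers it.

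Two further gaps. First, your crystal substitute $C(\mu)$ for $\mu\le 2$ silently requires $d_H(a),d_H(b)\ge\mu+1$ for $F$ to keep maximum degree at most $\Delta$; in Lemma~\ref{lem:parallel} this held because in the relevant subcase each pole dominated one side of the parallel join, but in the series setting $d_H(a)=d_{H_1}(a)$ can equal $1$, and your ``$H_i$ is non-trivial'' remark yields only one neighbor. (The paper dispenses with the crystal entirely here; its first case is uniform in $\mu$, cases being split on the independent-pair/clique dichotomy rather than on $\mu$.) Second, the color $\gamma$ you assign to the cut vertex $c$ must be chosen of multiplicity \emph{one} in $\mathcal{C}$: the vertex $c$ may be adjacent to inner vertices on both sides, and Observation~\ref{obs:2parts} has no mechanism for keeping a duplicate copy of $\gamma$ away from $N(c)$. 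Such a choice is available because $\wi(H)\le 2k-3$ leaves at least three multiplicity-one colors, but you never impose it. In short, the skeleton (minor construction, multiset extension, Observation~\ref{obs:2parts}) matches the paper's, but your placement rules fail exactly against the two phenomena --- dominating poles and clique sides --- around which the paper's case analysis is organised.
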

\begin{proof}
  Suppose, on the contrary, that $H$ contradicts the statement. Subject to this,
  we choose~$H$ to have as few vertices as possible.  We may assume that
  $\wi{(H)} \geq k+1$ by Lemmas~\ref{lem:tree:k-1} and~\ref{lem:tree:k}.
  Let~$b$ be the common pole of~$H_1$ and~$H_2$ and let~$a$ and~$c$ be the other
  poles of~$H_1$ and~$H_2$, respectively.

  \textbf{Case~1: For each~$i\in\{1,2\}$, the subgraph of~$G$ induced
  by~$\IN(H_i)$ contains an independent set~$\{u_i^1,u_i^2\}$ of size~$2$.}
  Let~$F$ be the graph~$G -\IN(H)$ to which we add the edge~$ac$ if it is not already
  present. By the minimality of~$G$ there is an equitable $k$-coloring~$\alpha$
  of~$F$, which we aim to extend to~$G$ such that the multiset~$\sst{\alpha(v)}{v
  \in \IN(H)}$ equals the multiset~$C\coloneqq\{1,\dotsc,\wi{(H)}\}$, where
  each integer is reduced modulo~$k$.

  We know that $\wi{(H)}\leq\wi{(H_1)}+\wi{(H_2)}+1\leq 2k-3$.  It
  follows that there is
  a color~$\gamma\in\{1,\dotsc,k\}\setminus\{\alpha(a),\alpha(c)\}$ of
  multiplicity one in~$C$.  We set~$\alpha(b)\coloneqq\gamma$,
  $\alpha(u_1^1)\coloneqq\alpha(b)$, $\alpha(u_1^2)\coloneqq\alpha(a)$ and if
  necessary~$\alpha(u_2^1)\coloneqq\alpha(b)$
  and/or~$\alpha(u_2^2)\coloneqq\alpha(a)$. For each~$i\in\{1,2\}$, the
  subgraph~$H_i$ has at most~$k-3$ non-colored vertices left, so by
  Observation~\ref{obs:2parts} it is possible to extend the coloring using
  the~$k-3$ remaining colors with the corresponding multiplicities.

  \textbf{Case~2: $\IN(H_1)$ induces a clique.}
  We know that
\[
  \wi(H_1)\geq\wi(H)-\wi(H_2)-1\geq k+1-(k-2)-1\geq2.
\]
  By Corollary~\ref{cor:treetree}, $\IN(H_1)$ is a forest, so $\wi(H_1)=2$.
  It forces moreover $\wi(H_2)$ to be~$k-2$.  This in particular implies that
  $k\ge4$.  Observe that the minimality of~$H$ ensures that each of the poles~$a$
  and~$c$ has at least two neighbors in~$H$.
    
  Let~$d$ and~$e$ be the inner vertices of~$H_1$. We define~$F$ to be the
  graph~$G -(\IN(H_1)\cup\IN(H_2))$ to which we add the edges~$ab$, $bc$
  and~$ac$ if not already present. Note that the graph thus obtained still has
  maximum degree at most~$\Delta$. By the minimality of~$G$ there is an equitable
  $k$-coloring~$\alpha$ of~$F$.

  It remains to deduce an equitable $k$-coloring of~$G$. To do so, we recolor~$b$ with
  a color~$\gamma$ different from~$\alpha(a)$, from~$\alpha(b)$ and
  from~$\alpha(c)$, which is possible as $k\ge4$. Next we color~$d$
  with~$\alpha(b)$ and $e$ with~$\alpha(c)$.  It now suffices to distribute
  arbitrarily the colors in~$\{1,\dotsc,k\}\setminus\{\gamma,\alpha(c)\}$ to the
  vertices in~$\IN(H_2)$.
\end{proof}

\noindent
We are now ready to conclude.
\begin{proof}[Proof of Theorem~\ref{thm:main}.]
  A direct induction on the tree~$T$ using Lemmas~\ref{lem:parallel}
  and~\ref{lem:series} shows that $G$ has at most~$k-2$ inner vertices.  This
  contradicts our assumption that $\abs{V(G)} > k$, thereby finishing the proof of
  Theorem~\ref{thm:main}.
\end{proof}




\begin{thebibliography}{10}

\bibitem{AlFu92}
{\sc N.~Alon and Z.~F{\"u}redi}, {\em Spanning subgraphs of random graphs},
  Graphs Combin., 8 (1992), pp.~91--94.

\bibitem{HaSz70}
{\sc A.~Hajnal and E.~Szemer{\'e}di}, {\em Proof of a conjecture of {P}.
  {E}rd{\H o}s}, in Combinatorial theory and its applications, {II} ({P}roc.
  {C}olloq., {B}alatonf\"ured, 1969), North-Holland, Amsterdam, 1970,
  pp.~601--623.

\bibitem{JaRu02}
{\sc S.~Janson and A.~Ruci{\'n}ski}, {\em The infamous upper tail}, Random
  Structures Algorithms, 20 (2002), pp.~317--342.
\newblock Probabilistic methods in combinatorial optimization.

\bibitem{KKMS10}
{\sc H.~A. Kierstead, A.~V. Kostochka, M.~Mydlarz, and E.~Szemer{\'e}di}, {\em
  A fast algorithm for equitable coloring}, Combinatorica, 30 (2010),
  pp.~217--224.

\bibitem{Kos02}
{\sc A.~V. Kostochka}, {\em Equitable colorings of outerplanar graphs},
  Discrete Math., 258 (2002), pp.~373--377.

\bibitem{KoNa03}
{\sc A.~V. Kostochka and K.~Nakprasit}, {\em Equitable colourings of
  {$d$}-degenerate graphs}, Combin. Probab. Comput., 12 (2003), pp.~53--60.

\bibitem{KNP05}
{\sc A.~V. Kostochka, K.~Nakprasit, and S.~V. Pemmaraju}, {\em On equitable
  coloring of {$d$}-degenerate graphs}, SIAM J. Discrete Math., 19 (2005),
  pp.~83--95.

\bibitem{Lih13}
{\sc K.-W. Lih}, {\em The equitable coloring of graphs}, in Handbook of
  combinatorial optimization, {V}ol.\ 2, Kluwer Acad. Publ., Boston, MA, 1998,
  pp.~1199--1248.

\bibitem{RoRu99}
{\sc V.~R{\"o}dl and A.~Ruci{\'n}ski}, {\em Perfect matchings in
  {$\epsilon$}-regular graphs and the blow-up lemma}, Combinatorica, 19 (1999),
  pp.~437--452.

\bibitem{ZhWu11}
{\sc X.~Zhang and J.-L. Wu}, {\em On equitable and equitable list colorings of
  series-parallel graphs}, Discrete Math., 311 (2011), pp.~800--803.

\end{thebibliography}
\end{document}